\newtheorem{thm}{Theorem}[section]
\newtheorem{cor}[thm]{Corollary}
\newtheorem{lem}[thm]{Lemma}
\newtheorem{prop}[thm]{Proposition}
\theoremstyle{definition}
\newtheorem{rem}[thm]{Remark}
\newtheorem{conj}[thm]{Conjecture}
\newtheorem{ex}[thm]{Example}
\newtheorem*{claim*}{Claim}
\theoremstyle{remark}
\numberwithin{equation}{thm}
\def\spec{\operatorname{Spec}}
\def\Ext{\operatorname{Ext}}
\def\Hom{\operatorname{Hom}}
\def\Tor{\operatorname{Tor}}
\def\Tr{\operatorname{Tr}}
\def\grade{\operatorname{grade}}
\def\depth{\operatorname{depth}}
\def\image{\operatorname{Im}}
\def\lhom{\operatorname{\underline{Hom}}}
\def\S{\mathrm{S}}
\def\p{\mathfrak{p}}
\def\Ass{\operatorname{Ass}}
\def\Min{\operatorname{Min}}
\def\gdim{\operatorname{Gdim}}
\def\syz{\mathrm{\Omega}}
\def\xx{\boldsymbol{x}}
\def\pd{\operatorname{pd}}
\def\cext{\mathrm{\widehat{Ext}}\mathrm{}}
\def\Z{\mathbb{Z}}
\def\E{\mathrm{E}}
\def\m{\mathfrak{m}}
\def\supp{\operatorname{Supp}}
\def\X{\operatorname{X}}
\def\Y{\operatorname{Y}}
\def\Ker{\operatorname{Ker}}
\def\Cok{\operatorname{Coker}}
\def\Q{\operatorname{Q}}
\def\NF{\operatorname{NF}}
\def\assh{\operatorname{Assh}}
\def\height{\operatorname{ht}}
\def\q{\mathfrak{q}}
\begin{document}
\allowdisplaybreaks
\title{Two generalizations of Auslander--Reiten duality and applications}
\author{Arash Sadeghi}
\address{School of Mathematics, Institute for Research in Fundamental Sciences (IPM), P.O. Box: 19395-5746, Tehran, Iran}
\email{sadeghiarash61@gmail.com}
\author{Ryo Takahashi}
\address{Graduate School of Mathematics, Nagoya University, Furocho, Chikusaku, Nagoya, Aichi 464-8602, Japan/Department of Mathematics, University of Kansas, Lawrence, KS 66045-7523, USA}
\email{takahashi@math.nagoya-u.ac.jp}
\urladdr{https://www.math.nagoya-u.ac.jp/~takahashi/}
\dedicatory{Dedicated to Professor Mohammad T. Dibaei on the occasion of his retirement}
\thanks{2010 {\em Mathematics Subject Classification.} 13D07 (Primary); 13C14, 13H10 (Secondary)}
\thanks{{\em Key words and phrases.} Auslander--Reiten duality, $n$-torsionfree module, Ext module, Cohen--Macaulay ring, Gorenstein ring, maximal Cohen--Macaulay module, Auslander--Reiten conjecture, totally reflexive module, G-dimension}
\thanks{Sadeghi's research was supported by a grant from IPM. Takahashi was partly supported by JSPS Grant-in-Aid for Scientific Research 16K05098 and JSPS Fund for the Promotion of Joint International Research 16KK0099}
\begin{abstract}
This paper extends Auslander--Reiten duality in two directions.
As an application, we obtain various criteria for freeness of modules over local rings in terms of vanishing of Ext modules, which recover a lot of known results on the Auslander--Reiten conjecture.
\end{abstract}
\maketitle
\section{Introduction}

The {\em Auslander--Reiten conjecture} \cite{AuRe} is one of the most celebrated conjectures in the representation theory of algebras.
This long-standing conjecture is known to hold true over several classes of algebras, including algebras of finite representation type \cite{AuRe} and symmetric artin algebras with radical cube zero \cite{Ho}.
This conjecture is closely related to other conjectures such as the {\em Nakayama conjecture} \cite{Nak,AuRe} and the {\em Tachikawa conjecture} \cite{ABS,Tach}.
Although the Auslander--Reiten conjecture was initially proposed over artin algebras, it remains meaningful for arbitrary commutative noetherian rings:

\begin{conj}[Auslander--Reiten]\label{ar}
Let $R$ be a commutative noetherian ring $R$ and let $M$ be a finitely generated $R$-module.
If $\Ext^{i}_{R}(M,M )=0=\Ext^{i}_{R}(M,R)$ for all $i\geq 1$, then $M$ is projective.
\end{conj}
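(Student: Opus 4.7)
The plan is to attack this statement using a duality between certain Ext modules — namely, Auslander–Reiten duality and its generalizations, as the paper's title suggests. Since projectivity of a finitely generated module over a noetherian ring can be checked locally and compatibly with faithfully flat extensions, I would first reduce to the case where $(R,\m)$ is a complete noetherian local ring. Under this reduction it suffices to show that $M$ is free, so the question becomes: can one extract ``$\mathrm{id}_M=0$ in the stable category'' from the two vanishing hypotheses?

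The classical model to keep in mind is the Cohen--Macaulay Gorenstein case. When $R$ is Gorenstein complete local of dimension $d$ and $M$ is maximal Cohen--Macaulay, Auslander--Reiten duality gives a natural isomorphism of the form
\[
\lhom_R(N,\syz^{d}\tau M)\cong\Hom_R(\lhom_R(M,N),\E_R(R/\m)),
\]
where $\tau M$ is the Auslander transpose. The vanishing $\Ext^{i}_R(M,R)=0$ for $i\ge 1$ forces $M$ to be totally reflexive, so $\tau M$ has a well-behaved syzygy theory; combined with $\Ext^{i}_R(M,M)=0$ for $i\ge1$, AR duality then collapses the relevant stable Hom, and one deduces that the identity of $M$ factors through a free module. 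This closes the argument in the Gorenstein setting and recovers the known cases of the conjecture.

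Going beyond Gorenstein is the reason one needs a \emph{generalization} of AR duality. My plan would be to substitute one of the two generalizations announced in the paper for the classical one: either relax the Cohen--Macaulay assumption on $R$ by replacing the canonical module with a suitable dualizing complex and the transpose with its derived analogue, or relax the maximal Cohen--Macaulay assumption on $M$ by working with a sufficiently high syzygy $\syz^{n}M$ whose defect is controlled by the vanishing of $\Ext^{i}_R(M,R)$. In each case, the structure of the argument is unchanged: apply the generalized duality to $\lhom_R(M,M)$, use $\Ext^{i}_R(M,M)=0$ to annihilate the dualized side, and conclude that $\mathrm{id}_M$ is zero modulo projectives.

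The main obstacle is that the full conjecture remains open, so any honest proof must either narrow the class of rings (e.g.\ to Cohen--Macaulay rings with canonical module, or to rings admitting finite Cohen--Macaulay type) or supplement the Ext-vanishing by an auxiliary finiteness assumption on $M$ (such as finite G-dimension, or finite injective dimension at each localization). The delicate step is not the duality formalism but the matching of hypotheses: one needs enough vanishing of $\Ext^{i}_R(M,R)$ to bring a high enough syzygy of $M$ into the range where the generalized duality applies, and simultaneously enough vanishing of $\Ext^{i}_R(M,M)$ to kill the dualized side of that duality. I expect the two generalizations of Auslander--Reiten duality developed in the paper to be engineered precisely so that this balance can be struck in settings strictly larger than the classical Gorenstein one, yielding new instances of Conjecture~\ref{ar} as corollaries.
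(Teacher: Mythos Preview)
The statement you are trying to prove is labeled \texttt{conj} in the paper: it is the Auslander--Reiten \emph{conjecture}, stated as an open problem, and the paper does \emph{not} prove it. There is therefore no proof in the paper to compare your proposal against. What the paper actually does is prove the conjecture under additional hypotheses (Corollaries \ref{cc}, \ref{abc}, \ref{c3}, \ref{ccc}, etc.), each of which imposes side conditions such as generic Gorensteinness, normality, bounds on the nonfree locus, or finite G-dimension of $\Hom_R(M,M)$.

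Your proposal is not a proof but a heuristic outline, and you yourself concede this in the final paragraph (``the full conjecture remains open, so any honest proof must either narrow the class of rings \dots\ or supplement the Ext-vanishing''). The concrete gap is that the generalized Auslander--Reiten dualities in the paper (Theorems \ref{7} and \ref{14}) carry nontrivial hypotheses---$(n+1)$-torsionfreeness of $M$, control of $\NF(M)\cap\NF(N)$ inside $\Y^n(R)$ or $\{\m\}$, Cohen--Macaulayness with a canonical module---none of which follow from the bare vanishing $\Ext^i_R(M,M)=\Ext^i_R(M,R)=0$ for $i\ge1$ over an arbitrary commutative noetherian ring. Your ``balance'' between the two vanishing conditions does not exist in general: the duality machinery simply does not apply without the extra structural assumptions, and no amount of syzygy-shifting produces them for free. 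So the proposal, read as a proof of Conjecture~\ref{ar} as stated, has a genuine and fatal gap; read as a sketch of how the paper attacks \emph{special cases}, it is roughly accurate in spirit but should be reframed as such.
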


\noindent
Auslander, Ding and Solberg \cite{ADS} proved that Conjecture \ref{ar} holds for any complete intersection local rings.
Recently there has been various progress towards Conjecture \ref{ar}; see \cite{ACST,CIST,CT,CH1,CH2,DEL,GT, HL, CSV, OnYos} for instance.
Using Auslander--Reiten duality, Araya \cite{A} proved that if all Gorenstein local rings of dimension at most one satisfy the Auslander--Reiten conjecture, then so do all Gorenstein local rings.

In this paper, we extend Auslander--Reiten duality in two directions; the following two theorems are included in the main results of this paper.

\begin{thm}\label{s}
Let $R$ be a commutative noetherian ring.
Let $n\ge1$ be an integer.
Let $M,N$ be finitely generated $R$-modules.
Assume that $M$ is $(n+1)$-torsionfree (e.g. totally reflexive) and $\NF(M)\cap\NF(N)\subseteq\Y^n(R)$.
\begin{enumerate}[\rm(1)]
\item
There exists an exact sequence of $R$-modules
\begin{align*}
0\to&\Ext^{n-1}_R(\Hom_R(M,N),R)\to  \Ext^{n-1}_R(N,M) \to \Ext^n_R(\lhom_R(M,N),R)\\
\to& \Ext^n_R(\Hom_R(M,N),R).
\end{align*}
\item
For all integers $i\le n-2$ one has an isomorphism of $R$-modules
$$
\Ext^i_R(\Hom_R(M,N),R)\cong\Ext^i_R(N,M).
$$
\end{enumerate}
\end{thm}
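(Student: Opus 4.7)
The plan is to combine the classical Auslander--Bridger fundamental four-term exact sequence with a derived tensor--hom adjunction, using the $(n+1)$-torsionfreeness of $M$ to control vanishing of $\Ext^j_R(M^\ast,R)$ in low degrees, and using the hypothesis $\NF(M)\cap\NF(N)\subseteq\Y^n(R)$ to ensure that ``error terms'' supported in $\Y^n(R)$ have trivial $\Ext^i(-,R)$ for small $i$. Concretely, since $M$ is $(n+1)$-torsionfree it is in particular reflexive, so applying the Auslander--Bridger fundamental exact sequence with $M^\ast$ in place of $M$ yields
$$
0 \to \Ext^1_R(\Tr M^\ast, N) \to N\otimes_R M^\ast \xrightarrow{\omega} \Hom_R(M, N) \to \Ext^2_R(\Tr M^\ast, N) \to 0,
$$
in which $\omega$ is the natural evaluation map. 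The image of $\omega$ is the submodule of homomorphisms factoring through a free module, so $\Cok(\omega) = \lhom_R(M, N)$. The first step is to split this into two short exact sequences and apply $\Ext^\ast_R(-, R)$ to each.

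Second, the stable isomorphism $\syz^2 \Tr M \cong M^\ast$, valid because $M$ is torsionless, translates the $(n+1)$-torsionfreeness of $M$ into the vanishing $\Ext^j_R(M^\ast, R) = 0$ for $1 \le j \le n - 1$. Combined with the derived tensor--hom adjunction
$$
\rhom_R(N \otimes^L_R M^\ast, R) \simeq \rhom_R(N, \rhom_R(M^\ast, R)),
$$
with the hyperhomology spectral sequences on both sides, and with the fact that $\Tor^R_q(N, M^\ast)$ is supported in $\NF(M) \cap \NF(N) \subseteq \Y^n(R)$ for $q \ge 1$, this should yield natural isomorphisms $\Ext^i_R(N \otimes_R M^\ast, R) \cong \Ext^i_R(N, M)$ in the range $i \le n - 1$. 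Parallel support-theoretic arguments applied to $\Ker(\omega) = \Ext^1_R(\Tr M^\ast, N)$ and $\lhom_R(M, N) = \Ext^2_R(\Tr M^\ast, N)$, both supported in $\NF(M) \cap \NF(N)$ because $\omega$ becomes an isomorphism wherever $M$ or $N$ is locally free, then give $\Ext^i_R(\Ker(\omega), R) = 0$ and $\Ext^i_R(\lhom_R(M, N), R) = 0$ for $i \le n - 1$.

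Finally, splicing the long exact sequences produced by applying $\Ext^\ast_R(-, R)$ to the two short exact sequences from the first step together with the isomorphisms of the second step yields the chain
$\Ext^i_R(\Hom_R(M, N), R) \cong \Ext^i_R(\image(\omega), R) \cong \Ext^i_R(N \otimes_R M^\ast, R) \cong \Ext^i_R(N, M)$
for $i \le n - 2$, which is part~(2), and the four-term exact sequence of part~(1) falls out at the boundary degrees $i = n - 1, n$ from the remaining connecting homomorphisms. The main obstacle is the careful boundary analysis at degrees $n - 1$ and $n$: both the adjunction identification and the support-based vanishings stop being effective by one degree exactly at this boundary, so the four-term piece in part~(1) must be extracted from the nontrivial interaction of the connecting homomorphisms in the two long exact sequences with the relevant spectral-sequence differentials. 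Verifying that the stated four-term sequence is exact, begins with an injection, and has the natural maps throughout will require a careful diagram chase.
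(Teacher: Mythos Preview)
Your approach is correct and coincides with the paper's. The paper organizes the argument as Lemma~\ref{21} (the Auslander--Bridger four-term sequence, with $\Tor_i^R(\Tr M,N)$ as the error terms rather than your $\Ext^i_R(\Tr M^\ast,N)$, but these agree) followed by Theorem~\ref{7}, which carries out exactly the two-spectral-sequence comparison you describe to identify $\Ext^i_R(M^\ast\otimes_RN,R)\cong\Ext^i_R(N,M)$ for $i\le n-1$.

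Your concern about the boundary is overstated: the spectral-sequence identification is valid through $i\le n-1$, and the grade argument gives $\Ext^i_R(\Ker\omega,R)=\Ext^i_R(\lhom_R(M,N),R)=0$ for $i\le n-1$; these are precisely the degrees needed, so the four-term sequence in (1) drops out of the two long exact sequences with no residual spectral-sequence differentials to chase. One small correction: the stable isomorphism $\syz^2\Tr M\cong M^\ast$ holds for any $M$ (it is read off from dualizing a free presentation), not just for torsionless $M$.
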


\begin{thm}\label{t}
Let $(R,\m,k)$ be a Cohen--Macaulay local ring of dimension $d\ge2$ with canonical module $\omega$.
Let $M,N$ be maximal Cohen--Macaulay $R$-modules such that $\NF(M)\cap\NF(N)\subseteq\{\m\}$.
\begin{enumerate}[\rm(1)]
\item
One has an isomorphism of $R$-modules
$$
\lhom_R(M,N)^\vee\cong\Ext_R^{d-1}(M^*,N^\dag).
$$
\item
Suppose that $M$ is locally totally reflexive (e.g. locally free) on the punctured spectrum of $R$.
Then for all $1\le i\le d-2$ one has an isomorphism of $R$-modules
$$
\Ext_R^i(M,N)^\vee\cong\Ext_R^{(d-1)-i}(M^*,N^\dag).
$$
\end{enumerate}
\end{thm}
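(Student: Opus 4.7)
The idea is to reduce both sides to Ext modules with values in the canonical module $\omega$ via local duality, collapse a biduality spectral sequence, and bridge the two via an Auslander--Bridger comparison together with Theorem~\ref{s}.

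Under the $\NF$-hypothesis, strengthened in part~(2) by the local-totally-reflexive assumption on $M$, at every prime $\p\ne\m$ either $M_\p$ is free or $M_\p$ is totally reflexive and $N_\p$ is free; in either case $\Ext^{\ge 1}_{R_\p}(M_\p,N_\p)=0$. Hence $\Ext^i_R(M,N)$ is of finite length for $i\ge 1$, and $\lhom_R(M,N)$ is of finite length as well (since $\lhom_{R_\p}(M_\p,N_\p)=0$ whenever $M_\p$ or $N_\p$ is free). Local duality for the Cohen--Macaulay ring $R$ then gives $\Ext^i_R(M,N)^\vee\cong\Ext^d_R(\Ext^i_R(M,N),\omega)$.

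Since $N$ is MCM, $N\cong\rhom_R(N^\dag,\omega)$, and hence $\rhom_R(\rhom_R(M,N),\omega)\cong M\otimes^L_R N^\dag$ by Grothendieck biduality. The hyperext spectral sequence
\[
E_2^{p,-q}=\Ext^p_R(\Ext^q_R(M,N),\omega)\;\Longrightarrow\;\Tor_{q-p}(M,N^\dag)
\]
has $E_2$-page supported only on the column $p=d$ (from $\Ext^d$ of the finite-length $\Ext^q(M,N)$ for $q\ge 1$, yielding $\Ext^q(M,N)^\vee$) and on the row $q=0$ (yielding $\Ext^p(\Hom_R(M,N),\omega)$). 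The sole possibly nonzero differential $d_{i+1}\colon E_{i+1}^{d-1-i,\,0}\to E_{i+1}^{d,\,-i}$ is forced by a dimension count to be an isomorphism for $1\le i\le d-2$, because the abutments at both total degrees vanish ($\Tor_{i-d}(M,N^\dag)=0$ and $H^{d-1-i}(M\otimes^L_R N^\dag)=0$), producing
\[
\Ext^i_R(M,N)^\vee\cong\Ext^{d-1-i}_R(\Hom_R(M,N),\omega).
\]

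To match the right-hand side of the theorem I would invoke the Auslander--Bridger sequence $0\to\Ext^1_R(\Tr M^*,N)\to M^*\otimes_R N\to\Hom_R(M,N)\to\Ext^2_R(\Tr M^*,N)\to 0$ (valid since $M\cong M^{**}$ holds on the punctured spectrum by local total reflexivity) together with the derived identity $\rhom_R(M^*,N^\dag)\cong\rhom_R(M^*\otimes^L_R N,\omega)$. Finite-length vanishing of the Auslander--Bridger end terms, combined with a parallel biduality spectral-sequence analysis and Theorem~\ref{s}(2) at $n=d-1$ (which converts $\Ext_R(\Hom_R(M,N),R)$ into the required form), identifies $\Ext^{d-1-i}_R(\Hom_R(M,N),\omega)$ with $\Ext^{d-1-i}_R(M^*,N^\dag)$ in the range $1\le i\le d-2$, proving part~(2). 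For part~(1), the four-term exact sequence of Theorem~\ref{s}(1) at $n=d-1$ provides the stable-Hom factor $\lhom_R(M,N)$ on the source side, replacing the non-finite-length $\Hom_R(M,N)$ at the boundary $i=0$ and yielding $\lhom_R(M,N)^\vee\cong\Ext^{d-1}_R(M^*,N^\dag)$.

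The main obstacle is verifying the $(n+1)$-torsionfree hypothesis of Theorem~\ref{s} in our setting, where MCM modules over a Cohen--Macaulay (non-Gorenstein) ring need not be globally torsionfree over $R$. I would address this by replacing $M$ with a sufficiently high syzygy, preserving MCM-ness and local total reflexivity on the punctured spectrum, and transferring the conclusion via the Auslander--Bridger dimension shift $\Ext^i_R(M^*,-)\cong\Ext^{i+2}_R(\Tr M,-)$ for $i\ge 1$. A secondary delicate point is matching the $\Tor_k(M^*,N)^\vee$ corrections arising in the second spectral sequence with the corrections from the Auslander--Bridger comparison---this compatibility is precisely what Theorem~\ref{s} provides.
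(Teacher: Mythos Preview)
Your biduality spectral sequence for part~(2) is sound: once $\Ext^{\ge1}_R(M,N)$ has finite length, the collapse you describe does give
\[
\Ext^i_R(M,N)^\vee\;\cong\;\Ext^{d-1-i}_R(\Hom_R(M,N),\omega)\qquad(1\le i\le d-2).
\]
This is a genuinely different route from the paper, which instead proves $\Ext_R^i(M,N)^\vee\cong\Ext_R^{d+1-i}(\Tr M,N^\dag)$ by induction on $i$ (base case via $\Gamma_\m(\Tr M\otimes_RN)$ and Lemma~\ref{l1}, inductive step via $\syz M$), and only at the end passes from $\Tr M$ to $M^*$.

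The gap is in your final conversion step. Theorem~\ref{s} (which is the case $K=R$ of Theorem~\ref{7}) produces isomorphisms of the form $\Ext^i_R(\Hom_R(M,N),R)\cong\Ext^i_R(N,M)$; it does not relate $\Ext^{\bullet}_R(\Hom_R(M,N),\omega)$ to $\Ext^{\bullet}_R(M^*,N^\dag)$, and no amount of syzygy-shifting will make it do so. What you actually need is the more primitive Lemma~\ref{21} with $K=\omega$ and $n=d$ (its hypothesis $\NF(M)\cap\NF(N)\subseteq\Y^d(\omega)=\{\m\}$ holds, and there is \emph{no} torsionfree requirement there), which yields
\[
\Ext^j_R(\Hom_R(M,N),\omega)\;\cong\;\Ext^j_R(M^*\otimes_RN,\omega)\qquad(j\le d-2),
\]
together with Lemma~\ref{l1} applied to $(M^*,N)$, which gives $\Ext^j_R(M^*\otimes_RN,\omega)\cong\Ext^j_R(M^*,N^\dag)$ for $0\le j\le d$. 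Chaining these with your spectral-sequence isomorphism finishes part~(2). In particular, your worry about the $(n+1)$-torsionfree hypothesis is a red herring caused by reaching for Theorem~\ref{s} instead of Lemma~\ref{21}.

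For part~(1) your spectral-sequence collapse is not available, since without the local-total-reflexivity hypothesis the modules $\Ext^{\ge1}_R(M,N)$ need not have finite length (only $\lhom_R(M,N)$ does). The clean argument is again Lemma~\ref{21}(1) with $K=\omega$, $n=d$: in the resulting four-term sequence the outer terms $\Ext^{d-1}_R(\Hom_R(M,N),\omega)$ and $\Ext^d_R(\Hom_R(M,N),\omega)$ vanish by local duality because $\depth_R\Hom_R(M,N)\ge2$, and the middle terms are identified with $\Ext^{d-1}_R(M^*,N^\dag)$ (via Lemma~\ref{l1}) and $\lhom_R(M,N)^\vee$ (via local duality for the finite-length module $\lhom_R(M,N)$). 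The invocation of Theorem~\ref{s}(1) you propose does not produce either side of the desired isomorphism.
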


\noindent
Let us explain the notation and terminology used above.
For an integer $n$, we denote by $\Y^n(R)$ the set of prime ideals $\p$ with $\depth R_\p\ge n$.
An $n$-torsionfree module is a module $M$ with $\Ext_R^i(\Tr M,R)=0$ for $1\le i\le n$, where $\Tr M$ stands for the Auslander transpose of $M$.
We denote by $\NF(M)$ the non-free locus of an $R$-module $M$, that is, the set of prime ideals $\p$ such that $M_\p$ is not $R_\p$-free.
For $R$-modules $M,N$ we denote by $\lhom_R(M,N)$ the stable Hom module, namely, the quotient of $\Hom_R(M,N)$ by the homomorphisms factoring through projective modules.
Also, $(-)^\vee=\Hom_R(-,\E_R(k))$, $(-)^*=\Hom_R(-,R)$, and $(-)^\dag=\Hom_R(-,\omega)$ are the Matlis, algebraic and canonical duals, respectively.

Both of the above two theorems recover the following celebrated Auslander--Reiten duality theorem for $d\ge2$.
In fact, Theorem \ref{t} is included in a more general result (Theorem \ref{14}), which recovers Corollary \ref{ard} for all $d\ge0$.

\begin{cor}[Auslander--Reiten duality]\label{ard}
Let $(R,\m)$ be a $d$-dimensional Gorenstein local ring.
Let $M,N$ be maximal Cohen--Macaulay $R$-modules such that $\NF(M)\cap\NF(N)\subseteq\{\m\}$.
Then for all $i\in\Z$ there is an isomorphism of Tate cohomology modules
$$
\cext_R^i(N,M)^\vee\cong\cext_R^{(d-1)-i}(M,N).
$$
\end{cor}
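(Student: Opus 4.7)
The plan is to derive the corollary from Theorem \ref{t} together with three standard consequences of the Gorenstein hypothesis: $\omega=R$ (so $(-)^\dag=(-)^*$), every maximal Cohen--Macaulay (MCM) module is totally reflexive (in particular locally totally reflexive on the punctured spectrum), and complete resolutions exist with $\cext_R^\ast(-,P)=0=\cext_R^\ast(P,-)$ for any free module $P$.

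The first ingredient I would set up is the identity
$$
\Ext_R^i(X,Y)\cong \Ext_R^i(Y^*,X^*)
$$
for all $i$ and all MCM modules $X,Y$: since $Y$ is totally reflexive, $Y\simeq \rhom_R(Y^*,R)$ in the derived category, so tensor--hom adjunction and the symmetry of $\otimes^{\mathbf{L}}$ give $\rhom_R(X,Y)\simeq \rhom_R(Y^*,X^*)$. Substituting into Theorem \ref{t}(2) with the roles of $M$ and $N$ exchanged yields, for $1\le i\le d-2$, the isomorphism $\Ext_R^i(N,M)^\vee\cong \Ext_R^{(d-1)-i}(M,N)$; since $\cext_R^j=\Ext_R^j$ on MCM modules for $j\ge 1$, this is the corollary in that range. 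For the endpoints I would use Theorem \ref{t}(1): its $M$--$N$ swap combined with the biduality identity handles $i=0$, while for $i=d-1$ one observes that $\lhom_R(M,N)$ is supported on $\{\m\}$ (the stable Hom vanishes at any prime where one of $M,N$ is free) and hence has finite length, so Matlis duality is involutive on it and the required formula follows by dualizing the output of Theorem \ref{t}(1).

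To pass from $0\le i\le d-1$ to all $i\in\Z$ I would use syzygy shifting in Tate cohomology. The vanishing $\cext_R^\ast(-,P)=0=\cext_R^\ast(P,-)$ on free $P$ forces $\cext_R^j(\syz N,M)\cong \cext_R^{j+1}(N,M)$ and $\cext_R^j(M,\syz N)\cong \cext_R^{j-1}(M,N)$ for every $j\in\Z$. Thus the corollary for the pair $(M,\syz N)$ at index $i$ is equivalent to the corollary for $(M,N)$ at index $i+1$, and cosyzygies supply the opposite shift. Because $\syz^{\pm 1}N$ remains MCM over the Gorenstein ring $R$ and $\NF(\syz^{\pm 1}N)\subseteq \NF(N)$, the hypotheses persist under iteration, so any $i\in\Z$ can be transported into the already-treated range.

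The main obstacle I anticipate is keeping the index bookkeeping consistent on both sides of the asserted duality---in particular, verifying that the shift of either argument by one syzygy alters each side of the isomorphism by the same net amount, so that the combinatorial symmetry $(d-1-i)+i=d-1$ underlying classical Auslander--Reiten duality is preserved throughout the induction.
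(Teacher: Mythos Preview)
Your argument is sound for $d\ge 2$, but it leaves a genuine gap when $d\le 1$. Theorem~\ref{t} carries the standing hypothesis $d\ge 2$, so neither part can be invoked for $d\in\{0,1\}$; and when $d=0$ your ``base range'' $0\le i\le d-1$ is empty, so the syzygy-shifting induction never starts. The paper itself flags this: Theorems~\ref{s} and~\ref{t} recover Corollary~\ref{ard} only for $d\ge 2$, while the unrestricted statement is obtained from the first isomorphism in Theorem~\ref{14}(1), $\lhom_R(M,N)^\vee\cong\Ext_R^{d+1}(\Tr M,N^\dag)$, which needs no lower bound on $d$ (this is how Corollary~\ref{ct} is proved). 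To cover all $d$ you should quote Theorem~\ref{14} rather than the truncated Theorem~\ref{t}.

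For $d\ge 2$ your route is correct but organized differently from either of the paper's two proofs. In Corollary~\ref{ards} the paper establishes only the single identity $\Ext_R^{d-1}(N,M)^\vee\cong\lhom_R(M,N)$, obtained from Corollary~\ref{12}(2) (hence ultimately from Theorem~\ref{s}) together with local duality, and then substitutes $\syz^{i-(d-1)}N$ for $N$ to reach every $i\in\Z$; your separate treatment of $1\le i\le d-2$ via Theorem~\ref{t}(2) is therefore redundant once the endpoint is known. In Corollary~\ref{ct} the paper instead manipulates $\Tr$ and $\syz$ directly. Your derived-category swap $\Ext_R^i(X,Y)\cong\Ext_R^i(Y^\ast,X^\ast)$ is a pleasant shortcut for passing from $\Ext_R^{d-1}(M^\ast,N^\ast)$ to $\Ext_R^{d-1}(N,M)$ that neither of the paper's arguments makes explicit, and your observation that $\lhom_R(M,N)$ has finite length (so that Matlis duality is involutive on it) is exactly the mechanism the paper uses as well.
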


\noindent
There are also many other applications of the above theorems, including an improved version of a main theorem of Celikbas and Takahashi \cite[Theroem 1.1]{CT}, and the following result.
Recall that a commutative noetherian ring is called {\em generically Gorenstein} if $R_\p$ is Gorenstein for all $\p\in\Ass R$.

\begin{cor}\label{cc}
Let $R$ be a generically Gorenstein local ring of depth $t$, and let $2\le n\le t$ be an integer.
Let $M$ be a finitely generated $R$-module such that $\NF(M)\subseteq\Y^n(R)$.
Then $M$ is free if it satisfies the following three conditions.
\begin{enumerate}[\rm(1)]
\item
$\Ext^i_R(M,M)=0$ for $n-1\leq i\leq t-1$.
\item
$\Ext^i_R(M,R)=0$ for $i>0$.
\item $\Ext^i_R(\Hom_R(M,M),R)=0$ for $n\leq i\leq t$, or $\Hom_R(M,M)$ has finite G-dimension.
\end{enumerate}
\end{cor}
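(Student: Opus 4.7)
The strategy is to apply Theorem~\ref{s} with $N=M$ in order to transfer Ext-vanishing from the pair $(M,M)$ to vanishing of $\Ext^i_R(\Hom_R(M,M),R)$, and then combine with condition~(3) and the Auslander--Bridger formula to force the endomorphism module to have small G-dimension, from which freeness of $M$ follows.

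First, I would verify the hypotheses of Theorem~\ref{s} with $N=M$. The intersection condition $\NF(M)\cap\NF(M)\subseteq\Y^n(R)$ is automatic. To show $M$ is $(n+1)$-torsionfree, I would use that condition~(2), namely $\Ext^i_R(M,R)=0$ for all $i\ge 1$, makes the dualized free resolution $0\to M^*\to F_0^*\to F_1^*\to\cdots$ exact, exhibiting $M^*$ as $\syz^2\Tr M$ and giving $\Ext^{i+2}_R(\Tr M,R)\cong\Ext^i_R(M^*,R)$ for $i\ge 1$. Since $R$ is generically Gorenstein and $\NF(M)\subseteq\Y^n(R)$ with $n\ge 2$ forces $M$ free at every associated prime of $R$, Auslander's sequence relating $M\to M^{**}$ to $\Ext^{1,2}_R(\Tr M,R)$ collapses to give reflexivity; the vanishings $\Ext^i_R(M^*,R)=0$ for $1\le i\le n-1$ follow by a parallel analysis on $M^*$, giving $(n+1)$-torsionfreeness.

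Setting $H=\Hom_R(M,M)$ and applying Theorem~\ref{s}(1) with $N=M$, the four-term exact sequence
\begin{align*}
0\to\Ext^{n-1}_R(H,R)\to\Ext^{n-1}_R(M,M)\to\Ext^n_R(\lhom_R(M,M),R)\to\Ext^n_R(H,R),
\end{align*}
together with $\Ext^{n-1}_R(M,M)=0$ from~(1), forces $\Ext^{n-1}_R(H,R)=0$. In the first alternative of~(3), $\Ext^i_R(H,R)=0$ for $n\le i\le t$, so the vanishing extends to the full range $[n-1,t]$; a converse to the Auslander--Bridger formula (Ext-vanishing of sufficient length in top degrees implies finite G-dimension) yields $\gdim_R H<\infty$, and AB then bounds $\gdim_R H\le n-2$. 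In the second alternative, $\gdim_R H<\infty$ is assumed directly, and combining Theorem~\ref{s}(2)---which gives $\Ext^i_R(H,R)\cong\Ext^i_R(M,M)$ for $i\le n-2$---with condition~(1) and AB bounds $\gdim_R H\le n-2$ analogously.

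Finally, the bound $\gdim_R H\le n-2$, together with the natural $R$-algebra inclusion $R\hookrightarrow H$ and the isomorphisms of Theorem~\ref{s}(2), allows a bootstrap that forces $\lhom_R(M,M)=0$: equivalently, $\mathrm{id}_M$ factors through a free module, so $M$ is free. The main obstacle is the uniform bound $\gdim_R H\le n-2$, especially in case~(B) of~(3), where one has only $\Ext^{n-1}_R(H,R)=0$ from Theorem~\ref{s} and must carefully leverage Theorem~\ref{s}(2) and the full Ext-vanishing range of~(1) via the Auslander--Bridger formula to push the G-dimension down to $n-2$.
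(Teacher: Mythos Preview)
There are genuine gaps. The most serious is the claimed ``converse to the Auslander--Bridger formula'': vanishing of $\Ext^i_R(H,R)$ on the finite range $[n-1,t]$ does \emph{not} imply $\gdim_R H<\infty$. Finite G-dimension requires vanishing of $\Ext^{>0}_R(H,R)$ together with dual conditions on $\Tr H$; no amount of Ext-vanishing in top degrees alone suffices. So in the first alternative of condition~(3) your route to $\gdim_R H\le n-2$ breaks down. In the second alternative the situation is no better: knowing $\gdim_R H<\infty$ and $\depth_R H\ge 2$ gives only $\gdim_R H\le t-2$ by Auslander--Bridger, not $\le n-2$; the extra fact $\Ext^{n-1}_R(H,R)=0$ from Theorem~\ref{s}(1) and the isomorphisms of Theorem~\ref{s}(2) for $i\le n-2$ do not help, since condition~(1) says nothing about $\Ext^i_R(M,M)$ for $i\le n-2$. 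With only $\gdim_R H\le t-2$ the ``bootstrap'' you allude to cannot be launched at level $n$.

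A second gap is the torsionfreeness step. The ``parallel analysis on $M^*$'' giving $\Ext^i_R(M^*,R)=0$ for $1\le i\le n-1$ is not justified---$M^*$ does not obviously satisfy the same hypotheses as $M$. More importantly, even $(n+1)$-torsionfreeness is not enough: any argument that climbs from $\NF(M)\subseteq\Y^n(R)$ up to $\NF(M)\subseteq\Y^t(R)$ must invoke Theorem~\ref{s} at each level $n,n+1,\dots,t$, and this needs $M$ to be $(t+1)$-torsionfree. The paper handles both issues at once by first showing that $M$ is \emph{totally reflexive} (from generic Gorensteinness, condition~(2), and a cited result of Yoshino), and then proving freeness by induction on $t-n$ via localization at primes in $\X^{t-1}(R)$, reducing to the case $\NF(M)\subseteq\Y^t(R)$ where Corollary~\ref{12} applies directly. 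No bound on $\gdim_R H$ sharper than $t-2$ is ever needed; in the first alternative of~(3) the argument uses Corollary~\ref{12}(1) and never mentions G-dimension of $H$ at all.
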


\noindent
It is worth mentioning that Corollary \ref{cc} simultaneously generalizes several known results on the Auslander--Reiten conjecture, including the theorems of Araya \cite[Corollary 4]{A}, Ono and Yoshino \cite[Theorem]{OnYos} and Araya, Celikbas, Sadeghi and Takahashi \cite[Corollary 1.6]{ACST}, and substantial parts of the theorems of Huneke and Leuschke \cite[Theorem 1.3]{HL}, Goto and Takahashi \cite[Theorem 1.5(2)]{GT} and Dao, Eghbali and Lyle \cite[Theorem 3.16]{DEL}.
Furthermore, Corollary \ref{cc} yields the following result.

\begin{cor}\label{abc}
Let $R$ be a noetherian normal local ring of depth $t$.
A finitely generated $R$-module $M$ is free, if either of the following conditions holds.
\begin{enumerate}[\rm(1)]
\item
$\Ext^i_R(M,M)=\Ext^j_R(M,R)=\Ext^h_R(\Hom_R(M,M),R)=0$ for all $1\le i\le t-1$, $j\ge1$ and $2\le h\le t$.
\item
$\Ext^i_R(M,M)=\Ext^j_R(M,R)=0$ for all $1\le i\le t-1$ and $j\ge1$, and that $\Hom_R(M,M)$ has finite G-dimension.
\end{enumerate}
\end{cor}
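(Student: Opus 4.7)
The plan is to reduce Corollary \ref{abc} to Corollary \ref{cc} applied with $n=2$. A noetherian normal local ring is a normal domain, and hence generically Gorenstein; moreover, Serre's conditions $(R_1)$ and $(S_2)$ characterizing normality give tight control over the non-free locus of $M$. The low-depth cases are immediate: if $t=0$ then $R$ is a field and $M$ is automatically free, while if $t=1$ the $(S_2)$-condition combined with normality forces $\dim R=1$, making $R$ a DVR; the hypothesis $\Ext^1_R(M,R)=0$ (contained in both (1) and (2)) then makes $M$ torsion-free, hence free. So I may assume $t\ge 2$.

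The heart of the proof is to verify the containment $\NF(M)\subseteq\Y^2(R)$ required by Corollary \ref{cc}. By $(S_2)$, every prime of height at least two has depth at least two, so it suffices to check that $M_\p$ is $R_\p$-free whenever $\height\p\le 1$. The case $\height\p=0$ is automatic because $R$ is a domain. For $\height\p=1$, the $(R_1)$-condition tells us that $R_\p$ is a DVR, and localizing the hypothesis gives $\Ext^1_{R_\p}(M_\p,R_\p)=0$; by the structure theorem for finitely generated modules over a DVR, any nonzero torsion summand would contribute nonzero $\Ext^1$, so $M_\p$ is free.

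Finally, I would match the Ext-vanishing hypotheses of Corollary \ref{abc} with those of Corollary \ref{cc} for $n=2$: the assumption $\Ext^i_R(M,M)=0$ for $1\le i\le t-1$ is precisely condition (1) (with $n-1=1$); $\Ext^j_R(M,R)=0$ for $j\ge 1$ is condition (2); and the vanishing $\Ext^h_R(\Hom_R(M,M),R)=0$ for $2\le h\le t$ in case (1), respectively the finite G-dimension of $\Hom_R(M,M)$ in case (2), is condition (3). Corollary \ref{cc} then yields that $M$ is free. I do not anticipate a serious obstacle; the whole argument is a specialization of Corollary \ref{cc} to the normal setting, and the only step carrying any genuine content is the deduction $\NF(M)\subseteq\Y^2(R)$ from $(R_1)+(S_2)$ together with $\Ext^1_R(M,R)=0$.
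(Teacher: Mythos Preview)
Your proof is correct and follows essentially the same route as the paper: normality gives generic Gorensteinness and, via $(R_1)+(S_2)$ together with $\Ext^1_R(M,R)=0$, local freeness of $M$ on $\X^1(R)$, after which Corollary~\ref{cc} with $n=2$ (equivalently, Corollary~\ref{c3} with $n=1$) applies. The paper's one-line argument phrases the local freeness step as ``every syzygy $R$-module over a normal ring is locally free on $\X^1(R)$'' and omits the degenerate cases $t\le 1$, which you handle explicitly.
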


In Section 2, we extend Auslander--Reiten duality.
We prove our main theorems in this section that yield Theorems \ref{s} and \ref{t} as special cases, and recover Corollary \ref{ard}.
In Section 3, we apply our theorems to give various criteria for freeness of modules in relation to Conjecture \ref{ar}, including Corollaries \ref{cc} and \ref{abc}.
Among other things, we generalize the main theorem of \cite{ACST} in this section (Corollary \ref{t2}).

\section*{Convention}
Throughout this paper, we assume that all rings are commutative noetherian and all modules are finitely generated.
Let $R$ be a ring.
We denote by $(-)^*$ the $R$-dual $\Hom_R(-,R)$.
If $R$ is local, then $\m,k,d,t,(-)^\vee$ always stand for the maximal ideal of $R$, the residue field of $R$, the (Krull) dimension of $R$, the depth of $R$, and the Matlis dual $\Hom_R(-,\E_R(k))$, respectively.
Whenever $R$ admits a canonical module $\omega$, we denote by $(-)^\dag$ the canonical dual $\Hom_R(-,\omega)$.
For an $R$-module $M$, we denote by $\syz M$ and $\Tr M$ the (first) syzygy and the (Auslander) transpose of $M$.
We refer the reader to \cite{AB} for details on syzygies, transposes, $n$-torsionfree modules, totally reflexive modules and G-dimension.
For the definition and basic properties of Tate cohomology, we refer the reader to \cite[Section 7]{catgp}.
We say that an $R$-module $M$ is locally free (resp. totally reflexive, of finite G-dimension) on a subset $A$ of $\spec R$, provided that $M_\p$ is free (resp. totally reflexive, of finite G-dimension) as an $R_\p$-module for all $\p\in A$.
For an integer $n$ and an $R$-module $K$, let $\X^n(K)$ (resp. $\Y^n(K)$) denote the set of prime ideals $\p$ of $R$ such that $\depth_{R_\p}K_\p$ is at most (resp. at least) $n$.
\section{Extending Auslander--Reiten duality to $n$-torsionfree modules}

In this section, we extend Auslander--Reiten duality to $(n+1)$-torsionfree modules for $n\ge1$.
First we present some examples of such modules.

\begin{ex}\label{15}
Let $n\ge1$ be an integer.
An $R$-module $M$ is $(n+1)$-torsionfree in each of the following cases.
\begin{enumerate}[(1)]
\item
$M$ is totally reflexive.
\item
$R$ is local, $M$ is locally totally reflexive on the punctured spectrum, $\depth M\ge n\ge2$ and $\Ext_R^{n-1}(M^*,R)=0$.
\item
$M$ is locally of finite G-dimension on $\X^{n-1}(R)$ and $M$ is a $(n+1)$st syzygy.
\item
$R$ is Cohen--Macaulay and local, $M$ is locally of finite G-dimension on the punctured spectrum, and $M$ is a syzygy of a maximal Cohen--Macaulay module.
\end{enumerate}
\end{ex}

\begin{proof}
(1) By the definition of total reflexivity $\Ext^{>0}_{R}(\Tr M,R)=0$.
Thus $M$ is $i$-torsionfree for all $i\ge0$.

(2) We see $\depth_{R_\p}M_\p\ge\min\{n,\depth R_\p\}$ for $\p\in\spec R$.
The assumption $\depth_RM\ge n$ especially says $n\le d$, and $M_\p$ is totally reflexive for $\p\in\X^{n-1}(R)$.
Hence $M$ is $n$-torsionfree by \cite[Proposition 2.4]{DS}.
As $n\ge2$, we have $\Ext^{n+1}_{R}(\Tr M,R)\cong\Ext^{n-1}_{R}(M^*,R)=0$.
Therefore $M$ is $(n+1)$-torsionfree.

(3) This is a consequence of \cite[Theorem 43]{M}.

(4) Write $M=\syz N$ with $N$ maximal Cohen--Macaulay.
Note that $N$ is also locally of finite G-dimension on the punctured spectrum.
By \cite[Proposition 2.4]{DS} the module $N$ is a $d$th syzygy, and hence $M$ is a $(d+1)$st syzygy.
The assertion now follows from (3).
\end{proof}

To prove our first main result and some other results given later, we establish a lemma.

\begin{lem}\label{21}
Let $n\ge1$ be an integer.
Let $M,N,K$ be $R$-modules such that $\NF(M)\cap\NF(N)\subseteq\Y^n(K)$. \begin{enumerate}[\rm(1)]
\item
There exists an exact sequence of $R$-modules
$$
0 \to \Ext^{n-1}_R(\Hom_R(M,N),K) \to \Ext^{n-1}_R(M^*\otimes_RN,K) \to \Ext^n_R(\lhom_R(M,N),K)\to \Ext^n_R(\Hom_R(M,N),K).
$$
\item
For all integers $i\le n-2$ one has an isomorphism of $R$-modules $\Ext^i_R(\Hom_R(M,N),K) \cong \Ext^i_R(M^*\otimes_RN,K)$.
\end{enumerate}
\end{lem}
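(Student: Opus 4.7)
The plan is to exploit the natural evaluation homomorphism
$$
\sigma\colon M^*\otimes_R N\to\Hom_R(M,N),\qquad f\otimes y\mapsto\bigl(x\mapsto f(x)y\bigr).
$$
Its image is exactly the submodule of $\Hom_R(M,N)$ consisting of homomorphisms that factor through a projective module, so $\Cok\sigma=\lhom_R(M,N)$. Setting $I=\image\sigma$, this yields two short exact sequences
$$
0\to\Ker\sigma\to M^*\otimes_R N\to I\to 0\quad\text{and}\quad 0\to I\to\Hom_R(M,N)\to\lhom_R(M,N)\to 0.
$$

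Next, I would localize at a prime $\p$: if either $M_\p$ or $N_\p$ is free, then $\sigma_\p$ is easily seen to be an isomorphism (both sides being direct sums of copies of $M_\p^*$ or $N_\p$), and every $R_\p$-homomorphism $M_\p\to N_\p$ trivially factors through the free module at one end, so $\lhom_R(M,N)_\p=0$ too. Therefore both $\Ker\sigma$ and $\lhom_R(M,N)$ are supported inside $\NF(M)\cap\NF(N)\subseteq\Y^n(K)$. Combined with the standard fact that $\Ext^i_R(L,K)=0$ for $i<\inf\{\depth_{R_\p}K_\p:\p\in\supp L\}$ whenever $L$ is finitely generated (a consequence of Rees' theorem together with the localization formula for depth along an ideal), this gives $\Ext^i_R(\Ker\sigma,K)=0=\Ext^i_R(\lhom_R(M,N),K)$ for all $i<n$.

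It then remains to chase the two long exact sequences obtained by applying $\Ext^*_R(-,K)$. The first yields $\Ext^i_R(I,K)\cong\Ext^i_R(M^*\otimes_R N,K)$ for all $0\le i\le n-1$, while the second gives $\Ext^i_R(\Hom_R(M,N),K)\cong\Ext^i_R(I,K)$ for $i\le n-2$---immediately producing (2)---together with the exact portion in degrees $n-1$ and $n$:
\begin{align*}
0\to\Ext^{n-1}_R(\Hom_R(M,N),K)&\to\Ext^{n-1}_R(I,K)\to\Ext^n_R(\lhom_R(M,N),K)\\
&\to\Ext^n_R(\Hom_R(M,N),K).
\end{align*}
Substituting the identification $\Ext^{n-1}_R(I,K)\cong\Ext^{n-1}_R(M^*\otimes_R N,K)$ converts this into the exact sequence of (1).

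The one genuine subtlety is the localization step, where one must simultaneously check that $\sigma$ is locally an isomorphism \emph{and} that $\lhom_R(M,N)$ vanishes locally, at any prime where \emph{either} $M$ or $N$ is free; the asymmetric roles of $M$ and $N$ in the definition of $\sigma$ make this marginally less transparent than it first appears. Apart from this, the argument is a routine two-stage long exact sequence chase combined with the standard depth-to-grade principle.
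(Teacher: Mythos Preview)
Your proof is correct and follows essentially the same route as the paper's: both split the evaluation map $M^*\otimes_RN\to\Hom_R(M,N)$ into two short exact sequences, observe that its kernel and cokernel are supported in $\NF(M)\cap\NF(N)$ (hence have $K$-grade at least $n$), and then chase the resulting long exact sequences in $\Ext^*_R(-,K)$. The only cosmetic difference is that the paper packages the evaluation map via the Auslander--Bridger sequence $\Tor_2^R(\Tr M,N)\to M^*\otimes_RN\to\Hom_R(M,N)\to\Tor_1^R(\Tr M,N)\to0$ and cites Yoshino for $\Tor_1^R(\Tr M,N)\cong\lhom_R(M,N)$, whereas you verify the cokernel identification and the support bound directly.
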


\begin{proof}
According to \cite[Theorem (2.8)]{AB}, there is an exact sequence\footnote{In fact, one can add ``$0\to$'' at the beginning of the exact sequence, that is, $\Ker f\cong\Tor_2^R(\Tr M,N)$.}
$$
\Tor_2^R(\Tr M,N) \to M^*\otimes_R N \xrightarrow{f} \Hom_R(M,N) \to \Tor_1^R(\Tr M,N) \to 0.
$$
Set $H=\Ker f$, $C=\Tor_1^R(\Tr M,N)$ and $L=\image f$.
Note that the support of $H$ is contained in that of $\Tor_2^R(\Tr M,N)$.
Since $\NF(M)\cap\NF(N)\subseteq\Y^n(K)$, we see that $\Ext^i_R(C,K)=0=\Ext^i_R(H,K)$ for all $i<n$ by \cite[Lemma (4.5)]{AB}.
Dualizing the above exact sequence by $K$ yields isomorphisms $\Ext^i_R(\Hom_R(M,N),K)\cong\Ext^i_R(L,K)$ and $\Ext_R^j(L,K)\cong\Ext_R^j(M^*\otimes_RN,K)$ for all $i\le n-2$ and $j\le n-1$, and an exact sequence 
$$
0 \to \Ext^{n-1}_R(\Hom_R(M,N),K) \to \Ext^{n-1}_R(L,K)\to \Ext^n_R(C,K)\to \Ext^n_R(\Hom_R(M,N),K).
$$
It remains to note from \cite[Lemma (3.9)]{Yo} that $\lhom_R(M,N)\cong\Tor_1^R(\Tr M,N)=C$.
\end{proof}

Let $K$ be an $R$-module.
An $R$-module $M$ is called {\em $n$-torsionfree with respect to $K$} provided that $\Ext^i_R(\Tr M,K)=0$ for all $1\leq i\leq n$.
Note that $M$ is $n$-torsionfree if and only if it is $n$-torsionfree with respect to $R$.
For $R$-modules $M,N$ we denote by $\grade(M,N)$ the smallest non-negative integer $n$ such that $\Ext^i_R(M,N)\neq0$.
The following theorem is the first main result of this paper, which includes Theorem \ref{s} from the Introduction.

\begin{thm}\label{7}
Let $n\ge1$ be an integer.
Let $M,N,K$ be $R$-modules.
Assume that $M$ is $(n+1)$-torsionfree with respect to $K$, and that $\NF(M)\cap\NF(N)\subseteq\Y^n(K)$.
Then the following hold.
\begin{enumerate}[\rm(1)]
\item
There exists an exact sequence of $R$-modules
\begin{align*}
0 \to& \Ext^{n-1}_R(\Hom_R(M,N),K) \to \Ext^{n-1}_R(N,M\otimes_RK) \to \Ext^n_R(\lhom_R(M,N),K)\\
\to& \Ext^n_R(\Hom_R(M,N),K).
\end{align*}
\item
For all integers $i\le n-2$ one has an isomorphism of $R$-modules
$$
\Ext^i_R(\Hom_R(M,N),K)\cong\Ext^i_R(N,M\otimes_RK).
$$
\end{enumerate}
\end{thm}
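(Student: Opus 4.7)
The plan is to deduce Theorem \ref{7} from Lemma \ref{21} through a single identification: under the hypotheses of Theorem \ref{7},
$$
\Ext^i_R(M^*\otimes_R N,K)\cong\Ext^i_R(N,M\otimes_R K)\qquad(0\le i\le n-1).
$$
Substituting this into Lemma \ref{21}(1) at $i=n-1$ gives Theorem \ref{7}(1), and into Lemma \ref{21}(2) for $i\le n-2$ gives Theorem \ref{7}(2). So the entire content is to produce this isomorphism.

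The first step is to squeeze out the following consequences of the $(n+1)$-torsionfreeness of $M$ with respect to $K$, i.e.\ of $\Ext^i_R(\Tr M,K)=0$ for $1\le i\le n+1$. From a free presentation $P_1\to P_0\to M\to 0$ I would dualize to the four-term exact sequence
$$
0\to M^*\to P_0^*\to P_1^*\to\Tr M\to 0,
$$
split it through $L=\image(P_0^*\to P_1^*)$, and apply $\Hom_R(-,K)$. Dimension-shifting yields $\Ext^i_R(L,K)=0$ for $1\le i\le n$ and $\Ext^i_R(M^*,K)=0$ for $1\le i\le n-1$. Using $\Hom_R(P_j^*,K)\cong P_j\otimes_R K$ for finitely generated free $P_j$, and comparing the resulting right-exact sequence $P_1\otimes_R K\to P_0\otimes_R K\to\Hom_R(M^*,K)\to 0$ with the tensor of the original presentation $P_1\otimes_R K\to P_0\otimes_R K\to M\otimes_R K\to 0$, one obtains a canonical isomorphism $\Hom_R(M^*,K)\cong M\otimes_R K$.

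Next, I would combine these vanishings via the derived tensor--Hom adjunction $\rhom_R(M^*\otimes^{\mathbf{L}}_R N,K)\simeq\rhom_R(N,\rhom_R(M^*,K))$. Step 1 says that $\rhom_R(M^*,K)$ has cohomology concentrated in degree $0$ up to position $n-1$, with $H^0\cong M\otimes_R K$, so the spectral sequence $E_2^{p,q}=\Ext^p_R(N,\Ext^q_R(M^*,K))$ converging to the cohomology of $\rhom_R(M^*\otimes^{\mathbf{L}}_R N,K)$ collapses in total degree $\le n-1$ onto $E_2^{p,0}=\Ext^p_R(N,M\otimes_R K)$. On the other side, the discrepancy between derived and ordinary tensor is controlled by $\Tor_q^R(M^*,N)$ for $q\ge 1$, whose supports lie in $\NF(M)\cap\NF(N)\subseteq\Y^n(K)$; the AB-style lemma \cite[Lemma (4.5)]{AB} invoked already in the proof of Lemma \ref{21} forces $\Ext^p_R(\Tor_q^R(M^*,N),K)=0$ for $p<n$, so the companion spectral sequence $E_2^{p,q}=\Ext^p_R(\Tor_q^R(M^*,N),K)$ converging to the same target collapses in total degree $\le n-1$ onto $\Ext^i_R(M^*\otimes_R N,K)$. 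Chaining the two identifications yields the desired isomorphism.

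I expect the main obstacle to be one of bookkeeping: ensuring that at the boundary $i=n-1$ the isomorphism is induced by the natural map implicit in Lemma \ref{21}(1), so that substitution in its four-term sequence is unambiguous. A concrete alternative that sidesteps derived categories altogether is to fix a projective presentation $Q_1\to Q_0\to N\to 0$, use Step 1 to identify $\Hom_R(M^*\otimes_R Q_j,K)$ with $\Hom_R(Q_j,M\otimes_R K)$ termwise for $j=0,1$, and chase the resulting commutative diagram of long exact sequences; the only vanishings required are those established in Step 1 together with the Tor-support vanishing above.
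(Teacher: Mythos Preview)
Your proposal is correct and essentially identical to the paper's proof: the paper also reduces, via Lemma~\ref{21}, to showing $\Ext^i_R(M^*\otimes_R N,K)\cong\Ext^i_R(N,M\otimes_RK)$ for $i\le n-1$, and then compares the same two spectral sequences $\Ext^p_R(\Tor_q^R(M^*,N),K)$ and $\Ext^p_R(N,\Ext^q_R(M^*,K))$, collapsing each in low degrees using the grade bound from \cite[Lemma~(4.5)]{AB} on the one side and the vanishing of $\Ext^{1,\dots,n-1}_R(M^*,K)$ together with $M\otimes_RK\cong\Hom_R(M^*,K)$ (which the paper cites as \cite[Proposition~(2.6)]{AB}) on the other. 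Your hands-on derivation of these last two facts is exactly what underlies the cited results, so the arguments match.
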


\begin{proof}
Thanks to Lemma \ref{21}, it suffices to show that $\Ext^i_R(M^*\otimes_R N,K)\cong\Ext_R^i(N,M\otimes_RK)$ for all integers $i\le n-1$.
There are two spectral sequences converging to the same point:
$$
E_2^{pq}=\Ext^p_R(\Tor_q^R(M^*,N),K) \Longrightarrow H^{p+q},\qquad
F_2^{pq}=\Ext^p_R(N,\Ext^q_R(M^*,K)) \Longrightarrow H^{p+q}.
$$
As $\grade(\Tor_i^R(M^*,N),K)\ge n$ for all $i>0$ by \cite[Lemma (4.5)]{AB}, we have that $E_2^{pq}=0$ if $q>0$ and $p<n$, and obtain an isomorphism $E_2^{i0}\cong H^i$ for all integers $i\le n$.
Since $\Ext^i_R(\Tr M,K)=0$ for all $1\le i\le n+1$, we have $\Ext^i_R(M^*,K)=0$ for all $1\le i\le n-1$.
Hence $F_2^{pq}=0$ if $1\le q\le n-1$, and we get $F_2^{i0}\cong H^i$ for all integers $i\le n-1$.
As $n+1\ge2$, the module $M$ is $2$-torsionfree with respect to $K$, and $M\otimes_RK\cong\Hom_R(M^*,K)$ by \cite[Proposition (2.6)]{AB}.
Thus there are isomorphisms
$$
\Ext^i_R(M^*\otimes_R N,K)=E_2^{i0}\cong H^i\cong F_2^{i0}=\Ext^i_R(N,\Hom_R(M^*,K))\cong\Ext^i_R(N,M\otimes_RK)
$$
for all integers $i\le n-1$ and the proof of the theorem is completed.
\end{proof}

We obtain the following corollary as a consequence of the above theorem.

\begin{cor}\label{12}
Let $R$ be a local ring, and let $M,N$ be $R$-modules.
Assume that $M$ is $(t+1)$-torsionfree and that $\NF(M)\cap\NF(N)\subseteq\Y^t(R)$.
\begin{enumerate}[\rm(1)]
\item
If $\Ext_R^t(\Hom_R(M,N),R)=\Ext_R^{t-1}(N,M)=0$, then $\lhom_R(M,N)=0$.
\item
Suppose that $\Hom(M,N)$ has depth at least $2$ and finite G-dimension.
Then there exists an isomorphism
$$
\Ext_R^{t-1}(N,M) \cong \Ext_R^t(\lhom_R(M,N),R).
$$
In particular, $\Ext_R^{t-1}(N,M)=0$ if and only if $\lhom_R(M,N)=0$.
\end{enumerate}
\end{cor}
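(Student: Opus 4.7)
The plan is to specialize Theorem \ref{7} to $n=t$ and $K=R$. Under this substitution, ``$M$ is $(t+1)$-torsionfree'' and ``$\NF(M)\cap\NF(N)\subseteq\Y^t(R)$'' are precisely the hypotheses needed, so the theorem supplies a four-term exact sequence
\begin{align*}
0\to\Ext^{t-1}_R(\Hom_R(M,N),R)\to\Ext^{t-1}_R(N,M)\to\Ext^t_R(\lhom_R(M,N),R)\to\Ext^t_R(\Hom_R(M,N),R).
\end{align*}
Both statements of the corollary will then be read off from what the individual hypotheses impose on the end terms of this sequence.

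For (1), the two vanishing assumptions kill the second and fourth terms, so exactness forces $\Ext^t_R(\lhom_R(M,N),R)=0$. To pass from this single Ext-vanishing to $\lhom_R(M,N)=0$ I would first observe that $\lhom_{R_\p}(M_\p,N_\p)=0$ at any prime $\p$ where $M_\p$ or $N_\p$ is free over $R_\p$, since any homomorphism out of (or into) a projective factors through a projective. Hence $\supp\lhom_R(M,N)\subseteq\NF(M)\cap\NF(N)\subseteq\Y^t(R)$, and the standard grade-support identity gives $\grade(\lhom_R(M,N),R)\ge t$; combined with the just-derived vanishing at degree $t$, this improves to $\grade(\lhom_R(M,N),R)\ge t+1$. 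On the other hand, any nonzero finitely generated module over the local ring $R$ has $\m$ in its support, hence grade at most $\depth R_\m=t$. Reconciling these forces $\lhom_R(M,N)=0$.

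For (2), I would apply the Auslander--Bridger formula to $\Hom_R(M,N)$, which by hypothesis has finite G-dimension and depth at least $2$: one gets $\gdim_R\Hom_R(M,N)=t-\depth_R\Hom_R(M,N)\le t-2$. Consequently $\Ext^i_R(\Hom_R(M,N),R)=0$ for every $i\ge t-1$, so the first and fourth terms of the four-term sequence die and the middle arrow becomes the asserted isomorphism $\Ext^{t-1}_R(N,M)\cong\Ext^t_R(\lhom_R(M,N),R)$. The ``in particular'' clause then decomposes into two easy halves: the direction $\lhom_R(M,N)=0\Rightarrow\Ext^{t-1}_R(N,M)=0$ is tautological via the isomorphism, while the reverse direction recycles the grade-support argument of (1) to convert $\Ext^t_R(\lhom_R(M,N),R)=0$ into $\lhom_R(M,N)=0$.

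The only mildly subtle step I anticipate is the grade-support deduction in (1) that promotes the vanishing of a single Ext module into the vanishing of the module itself; everything else is bookkeeping on the exact sequence of Theorem \ref{7} together with a direct application of the Auslander--Bridger formula.
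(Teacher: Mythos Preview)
Your approach coincides with the paper's: specialize Theorem~\ref{7}(1) to $n=t$, $K=R$, use the resulting four-term sequence, and in (1) combine the Ext-vanishing at degree $t$ with the grade bound $\grade_R\lhom_R(M,N)\ge t$ (coming from the support inclusion) to force $\lhom_R(M,N)=0$; in (2), invoke the Auslander--Bridger formula to kill the outer terms of the sequence.

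There is one small gap, however. Theorem~\ref{7} requires $n\ge1$, so your exact sequence is simply unavailable when $t=0$, and part~(1) of the corollary is stated for arbitrary depth. The paper treats $t=0$ separately: the hypothesis then reads $\Hom_R(\Hom_R(M,N),R)=0$, and over a depth-zero local ring this forces $\Hom_R(M,N)=0$ (see \cite[Exercise 1.2.27]{BH}), whence $\lhom_R(M,N)=0$. Once you insert this one-line observation, your proof for $t\ge1$ matches the paper's exactly. (For part~(2) there is no issue: if $\Hom_R(M,N)\ne0$ then the Auslander--Bridger formula already forces $t\ge2$, and if $\Hom_R(M,N)=0$ the conclusion is trivial.)
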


\begin{proof}
(1) If $t=0$, then $\Hom_R(\Hom_R(M,N),R)=0$, and $\Hom_R(M,N)=0$ by \cite[Exercise 1.2.27]{BH}, which implies $\lhom_R(M,N)=0$.
Let $t>0$.
Theorem \ref{7}(1) shows $\Ext^t_R(\lhom_R(M,N),R)=0$, while $\grade_R\lhom_R(M,N)\ge t$ by \cite[Lemma (4.5)]{AB} or \cite[Proposition 1.2.10(a)]{BH}.
Hence $\grade_R\lhom_R(M,N)>t$.
Thus, if $\lhom_R(M,N)$ is nonzero, then its annihilator contains an $R$-sequence of length more than $t=\depth R$, which cannot occur.
We must have $\lhom_R(M,N)=0$.

(2) The last assertion is shown similarly to (1).
As for the first assertion, in view of Theorem \ref{7}(1), it is enough to show that $\Ext^i_R(\Hom_R(M,N),R)=0$ for $i=t-1,t$.
We have $\gdim_R(\Hom_R(M,N))=\depth R-\depth_R(\Hom_R(M,N))\le t-2$, which implies that $\Ext^i_R(\Hom_R(M,N),R)=0$ for all $i>t-2$.
\end{proof}

\begin{rem}
In fact, one can generalize Corollary \ref{12} to a $(t+1)$-torsionfree module $M$ with respect to a module $K$ satisfying $\NF(M)\cap\NF(N)\subseteq\Y^n(K)$ with $n=\depth_RK$, assuming that $K$ is semidualizing for (2).
We leave it to the reader as an exercise.
\end{rem}

Corollary \ref{12} immediately recovers Corollary \ref{ard} in the case where $\dim R\ge2$, as follows.
Thus our Theorem \ref{7} (Theorem \ref{s}) recovers the Auslander--Reiten duality theorem in dimension at least two.

\begin{cor}[Auslander--Reiten duality in dimension at least two]\label{ards}
Let $(R,\m)$ be a Gorenstein local ring with $d\ge2$.
Let $M,N$ be maximal Cohen--Macaulay $R$-modules with $\NF(M)\cap\NF(N)\subseteq\{\m\}$.
Then for each $i\in\Z$ there is an isomorphism
$$
\cext_R^i(N,M)^\vee\cong\cext_R^{(d-1)-i}(M,N).
$$

\end{cor}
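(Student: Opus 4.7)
The strategy is to reduce the claim, for any given $i \in \Z$, to an application of Corollary~\ref{12}(2) followed by local duality, after replacing $(N, M)$ by suitable non-negative syzygies $(N_0, M_0)$ chosen so that the target indices match. Since $R$ is Cohen--Macaulay of dimension $d$, we have $\Y^d(R) = \{\m\}$, so the prescribed non-free locus condition is exactly the case $t = d$ of the hypothesis of Corollary~\ref{12}. Given $i \in \Z$, pick non-negative integers $k, l$ with $k - l = i - d + 1$ (take $l = \max\{0, d-1-i\}$ and $k = \max\{0, i-d+1\}$) and set $N_0 = \syz^k N$, $M_0 = \syz^l M$. Over the Gorenstein $R$ these remain maximal Cohen--Macaulay and totally reflexive, and since syzygies preserve local freeness, $\NF(M_0) \cap \NF(N_0) \subseteq \{\m\}$. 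The standard shift $\cext^j_R(\syz^a X, \syz^b Y) \cong \cext^{j+a-b}_R(X, Y)$ (from the long exact sequence and the vanishing of $\cext$ against projectives in either slot) together with $d \ge 2$ give
$$
\cext^i_R(N, M) \cong \cext^{d-1}_R(N_0, M_0) = \Ext^{d-1}_R(N_0, M_0) \quad\text{and}\quad \cext^{d-1-i}_R(M, N) \cong \cext^0_R(M_0, N_0) = \lhom_R(M_0, N_0).
$$

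I then verify the hypotheses of Corollary~\ref{12}(2) for $(M_0, N_0)$ with $t = d$: total reflexivity of $M_0$ provides the $(d+1)$-torsionfree condition, and $\Hom_R(M_0, N_0)$ has finite G-dimension because $R$ is Gorenstein. The key depth bound $\depth_R \Hom_R(M_0, N_0) \ge 2$ follows from the reflexivity of $N_0$: Hom-tensor adjunction identifies $\Hom_R(M_0, N_0) \cong \Hom_R(M_0 \otimes_R N_0^*, R) = (M_0 \otimes_R N_0^*)^*$, and the $R$-dual of any finitely generated module has depth at least $\min\{2, d\} = 2$ over a local ring of depth $d \ge 2$ (dualize a finite free presentation $R^a \to R^b \to Z \to 0$ to get $0 \to Z^* \to R^b \to W \to 0$ with $W \hookrightarrow R^a$ of depth $\ge 1$, and apply the depth lemma). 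Corollary~\ref{12}(2) therefore yields $\Ext^{d-1}_R(N_0, M_0) \cong \Ext^d_R(\lhom_R(M_0, N_0), R)$.

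Finally, $\lhom_R(M_0, N_0)$ has support in $\{\m\}$ and is hence of finite length, so local duality over the Gorenstein ring gives $\Ext^d_R(\lhom_R(M_0, N_0), R) \cong \lhom_R(M_0, N_0)^\vee$, and Matlis duality gives $X^{\vee\vee} \cong X$ for finite length $X$. Combining with the shift identifications:
$$
\cext^i_R(N, M)^\vee \cong \Ext^{d-1}_R(N_0, M_0)^\vee \cong \lhom_R(M_0, N_0)^{\vee\vee} \cong \lhom_R(M_0, N_0) \cong \cext^{d-1-i}_R(M, N).
$$
The main subtlety is the depth estimate $\depth \Hom_R(M_0, N_0) \ge 2$, precisely where the assumption $d \ge 2$ is used essentially; the rest is a bookkeeping combination of the syzygy shift in Tate cohomology, Corollary~\ref{12}(2), and the local/Matlis duality pair.
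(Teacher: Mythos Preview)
Your argument is correct and follows essentially the same route as the paper: apply Corollary~\ref{12}(2), convert $\Ext^d_R(\lhom,R)$ to a Matlis dual via local duality (this is the step citing \cite[Corollary 3.5.9]{BH}), and then shift with syzygies in Tate cohomology. The only cosmetic difference is that the paper substitutes $\syz^{i-(d-1)}N$ (allowing negative syzygies) while you shift both variables by non-negative syzygies; also, your depth estimate for $\Hom_R(M_0,N_0)$ via the $R$-dual trick could be replaced by a direct appeal to \cite[Exercise 1.4.19]{BH}, which already gives $\depth\Hom_R(M_0,N_0)\ge\min\{2,\depth N_0\}=2$.
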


\begin{proof}
As $d\ge2$, one can apply Corollary \ref{12}(2) to get $\Ext^{d-1}_R(N,M)\cong\Ext^d_R(\lhom_R(M,N),R)$, which and \cite[Corollary 3.5.9]{BH} show $\Ext^{d-1}_R(N,M)^\vee\cong\lhom_R(M,N)$.
Substituting $\syz^{i-(d-1)}N$ for $N$ (and using basic facts on Tate cohomology \cite[Section 7]{catgp}), we obtain
$$
\cext^i_R(N,M)^\vee\cong\Ext^{d-1}_R(\syz^{i-(d-1)}N,M)^\vee\cong\lhom_R(M,\syz^{i-(d-1)}N)\cong\cext^{(d-1)-i}_R(M,N),
$$
which give an isomorphism as in the assertion.
\end{proof}

We prepare a lemma for the proof of the second main result of this paper, which is also be used later.
The proof of the lemma is analogous to that of \cite[Lemma 2.3]{GT}. 

\begin{lem}\label{l1}
Let $R$ be a Cohen--Macaulay local ring with canonical module $\omega$.
Let $M$ be an $R$-module and $N$ a maximal Cohen--Macaulay $R$-module.
Let $0\le m\le d$ be an integer such that $\NF(M)\cap\NF(N)\subseteq\Y^m(R)$.
Then $\Ext^i_R(M,N^\dagger)\cong\Ext^i_R(M\otimes_RN,\omega)$ for all integers $0\leq i\leq m$.	
\end{lem}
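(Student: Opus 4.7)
The plan is to build a single double complex whose two iterated-cohomology spectral sequences compute the two sides of the isomorphism. Let $P_\bullet \to M$ be a projective resolution and $\omega \to I^\bullet$ an injective resolution, and form the first-quadrant cochain double complex
\[
C^{p,q} \;=\; \Hom_R(P_p \otimes_R N, I^q) \;\cong\; \Hom_R(P_p, \Hom_R(N, I^q))
\]
via tensor-Hom adjunction. Both iterated spectral sequences converge to $H^{p+q}(\mathrm{Tot}(C))$; my task is to identify this abutment with $\Ext^\bullet_R(M, N^\dagger)$ through one of them and with $\Ext^\bullet_R(M \otimes_R N, \omega)$ through the other (in the appropriate range).

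Taking vertical cohomology first, row $p$ computes $\Ext^q_R(P_p \otimes_R N, \omega)$, which vanishes for $q > 0$ because $N$ is maximal Cohen--Macaulay and $P_p$ is free. The $E_1$ page thus collapses onto the row $q = 0$, namely $\Hom_R(P_\bullet, N^\dagger)$, whose horizontal cohomology is $\Ext^n_R(M, N^\dagger)$. Hence $H^n(\mathrm{Tot}(C)) \cong \Ext^n_R(M, N^\dagger)$ for all $n$.

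Taking horizontal cohomology first, the exactness of $\Hom_R(-, I^q)$ identifies the column-wise cohomology with $\Hom_R(\Tor_p^R(M, N), I^q)$; a subsequent vertical cohomology then gives $E_2^{p, q} = \Ext^q_R(\Tor_p^R(M, N), \omega)$, whose bottom row ($p = 0$) is precisely $\Ext^q_R(M \otimes_R N, \omega)$. The crucial vanishing step is: for $p \ge 1$, $\supp \Tor_p^R(M, N) \subseteq \NF(M) \cap \NF(N) \subseteq \Y^m(R)$, and because $\omega$ is maximal Cohen--Macaulay (so every $R$-regular sequence is $\omega$-regular), $\grade_R(\Tor_p^R(M, N), \omega) \ge m$, whence $E_2^{p, q} = 0$ whenever $p \ge 1$ and $q < m$.

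To conclude, I would examine the diagonal $p + q = i$ in $E_\infty$ for $0 \le i \le m$. Terms with $p \ge 1$ are already zero at $E_2$ by the previous step. At the left edge, incoming differentials to $E_r^{0, i}$ originate in negative columns and so vanish, while outgoing differentials $d_r \colon E_r^{0, i} \to E_r^{r, i - r + 1}$ with $r \ge 2$ land in zero terms because $i - r + 1 \le m - 1 < m$. Hence $E_\infty^{0, i} = E_2^{0, i} = \Ext^i_R(M \otimes_R N, \omega)$ exhausts the abutment $H^i(\mathrm{Tot}(C)) \cong \Ext^i_R(M, N^\dagger)$, giving the desired isomorphism. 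The main obstacle is the grade estimate used in the vanishing of $E_2^{p, q}$ for $p \ge 1, q < m$: it relies on the fact that for an $R$-module $L$ with $\supp L \subseteq \Y^m(R)$, one has $\grade_R(L, \omega) \ge m$, which is a consequence of Cohen--Macaulayness of $R$ together with maximal Cohen--Macaulayness of $\omega$ (cf.\ \cite[Lemma (4.5)]{AB} or \cite[Proposition 1.2.10]{BH}).
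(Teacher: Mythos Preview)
Your argument is correct and is essentially the standard spectral-sequence proof: the paper does not spell out a proof of this lemma but refers to \cite[Lemma 2.3]{GT}, which proceeds by the same two hypercohomology spectral sequences $\Ext^p_R(M,\Ext^q_R(N,\omega))$ and $\Ext^q_R(\Tor_p^R(M,N),\omega)$ that your double complex produces. One cosmetic remark: depending on which filtration you use, the bidegree of $d_r$ in the second spectral sequence may be $(1-r,r)$ rather than $(r,1-r)$, but this does not affect your edge analysis, since in either direction the relevant differential has one end outside the first quadrant and the other end in the region $p\ge 1$, $q<m$ already killed at $E_2$.
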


We are now ready to present our second main result, which includes Theorem \ref{t} from the Introduction.

\begin{thm}\label{14}
Let $(R,\m)$ be a Cohen--Macaulay local ring with canonical module $\omega$.
Let $M,N$ be $R$-modules such that $\NF(M)\cap\NF(N)\subseteq\{\m\}$. Assume that $N$ is maximal Cohen--Macaulay.
\begin{enumerate}[\rm(1)]
\item There is an isomorphism $\lhom_R(M,N)^\vee\cong\Ext_R^{d+1}(\Tr M,N^\dag)$.
Therefore, if $d\ge2$, then one has
$$
\lhom_R(M,N)^\vee\cong\Ext_R^{d-1}(M^*,N^\dag).
$$
\item
Suppose that $M$ is locally totally reflexive on the punctured spectrum of $R$.
Then there is an isomorphism $\Ext_R^i(M,N)^\vee\cong\Ext_R^{d+1-i}(\Tr M,N^\dag)$ for all $1\le i\le d$.
In particular, for all $1\leq i\leq d-2$ one has
$$
\Ext_R^i(M,N)^\vee\cong\Ext_R^{(d-1)-i}(M^*,N^\dag).
$$
\end{enumerate}
\end{thm}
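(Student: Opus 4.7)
For part~(1), the plan is to exploit that $N$ being maximal Cohen--Macaulay gives $\rhom_R(N,\omega)\simeq N^\dag$, so
$$
\rhom_R(\Tr M,N^\dag)\simeq\rhom_R(\Tr M\otimes_R^{\mathbf{L}}N,\omega),
$$
and to extract $\Ext_R^{d+1}(\Tr M,N^\dag)$ from the spectral sequence $E_2^{p,q}=\Ext_R^p(\Tor_q^R(\Tr M,N),\omega)\Longrightarrow\Ext_R^{p+q}(\Tr M,N^\dag)$. The hypothesis $\NF(M)\cap\NF(N)\subseteq\{\m\}$ forces each $\Tor_q^R(\Tr M,N)$ with $q\ge 1$ to have finite length, so by local duality $E_2^{p,q}=0$ unless $p=d$, in which case $E_2^{d,q}\cong\Tor_q^R(\Tr M,N)^\vee$. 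Combined with the vanishing $E_2^{p,0}=0$ for $p>d$ (coming from $\mathrm{id}_R\omega=d$), the unique survivor in total degree $d+1$ is $E_2^{d,1}\cong\Tor_1^R(\Tr M,N)^\vee$, and its differentials vanish for trivial reasons; then $\Tor_1^R(\Tr M,N)\cong\lhom_R(M,N)$ (as in the proof of Lemma~\ref{21}) finishes the job. For $d\ge 2$, the ``Therefore'' clause is a two-step dimension shift obtained by applying $\rhom_R(-,N^\dag)$ to the exact sequence $0\to M^*\to P_0^*\to P_1^*\to\Tr M\to 0$ dual to a presentation of $M$.

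For part~(2), I would first apply Lemma~\ref{l1} to the pair $(\Tr M,N)$ and then local duality, using $\NF(\Tr M)\subseteq\NF(M)$ and $\{\m\}\subseteq\Y^d(R)$, to obtain
$$
\Ext_R^{d+1-i}(\Tr M,N^\dag)\cong\Ext_R^{d+1-i}(\Tr M\otimes_R N,\omega)\cong H_\m^{i-1}(\Tr M\otimes_R N)^\vee\qquad(1\le i\le d).
$$
The extra hypothesis that $M$ is locally totally reflexive on the punctured spectrum forces $\Ext_R^i(M,N)$ to have finite length for $i\ge 1$: at any $\p\ne\m$, either $M_\p$ is free, or $N_\p$ is free and $M_\p$ is totally reflexive, and in both cases $\Ext_R^i(M,N)_\p=0$. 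Matlis double-dualization then reduces the assertion to the core isomorphism
$$
\Ext_R^i(M,N)\cong H_\m^{i-1}(\Tr M\otimes_R N)\qquad(1\le i\le d).
$$

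To prove this core isomorphism, I would fix a free resolution $P_\bullet\to M$ and consider the complex
$$
D^\bullet\colon\ 0\to\Tr M\otimes_R N\to P_2^*\otimes_R N\to P_3^*\otimes_R N\to\cdots,
$$
placed in degrees $0,1,2,\ldots$, with differentials induced by $P_\bullet^*$. Because $\Tr M\otimes_R N=\Cok(P_0^*\otimes_R N\to P_1^*\otimes_R N)$ and $P_\bullet^*\otimes_R N\cong\Hom_R(P_\bullet,N)$ computes $\Ext_R^*(M,N)$, a short diagram chase yields $H^i(D^\bullet)\cong\Ext_R^{i+1}(M,N)$ for all $i\ge 0$. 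I would then run the two hypercohomology spectral sequences of $\mathbf{R}\Gamma_\m(D^\bullet)$: the first, $E_2^{p,q}=H_\m^p(H^q(D^\bullet))$, is concentrated on the line $p=0$ (each $H^q(D^\bullet)$ being of finite length), giving $\mathbb{H}_\m^n(D^\bullet)\cong\Ext_R^{n+1}(M,N)$; the second, $E_1^{p,q}=H_\m^p(D^q)$, vanishes for $p<d$ and $q\ge 1$ since each $D^q$ with $q\ge 1$ is a finite direct sum of copies of the maximal Cohen--Macaulay module $N$, yielding $\mathbb{H}_\m^n(D^\bullet)\cong H_\m^n(\Tr M\otimes_R N)$ for $0\le n\le d-1$. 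Equating the two identifications gives the core isomorphism. The ``In particular'' clause then follows from the same two-step dimension shift as in part~(1), valid whenever $d+1-i\ge 3$, i.e., $i\le d-2$. The main obstacle will be verifying the cohomology identification $H^i(D^\bullet)\cong\Ext_R^{i+1}(M,N)$ and the degenerations of the two spectral sequences above; once these are established, the proof reduces to Lemma~\ref{l1}, local duality, and standard dimension shifts.
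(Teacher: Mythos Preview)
Your argument for part~(1) is essentially the paper's: the paper cites the same spectral sequence (via \cite[2.4]{ACST}), uses the same identification $\lhom_R(M,N)\cong\Tor_1^R(\Tr M,N)$, and the same local duality for finite-length modules; you have simply spelled out the degeneration.

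Part~(2), however, is a genuinely different route. The paper proceeds by \emph{induction on $i$}: the base case $i=1$ is handled via the exact sequence $0\to\Ext_R^1(M,N)\to\Tr M\otimes_RN\to\Hom_R((\Tr M)^*,N)$, which yields $\Ext_R^1(M,N)\cong\Gamma_\m(\Tr M\otimes_RN)$, after which Lemma~\ref{l1} and local duality finish; the inductive step replaces $M$ by $\syz M$ and uses the short exact sequence $0\to\Ext_R^1(M,R)\to\Tr M\to\syz\Tr\syz M\to0$ together with the finite length of $\Ext_R^1(M,R)$. Your approach is instead \emph{uniform in $i$}: you combine Lemma~\ref{l1} with local duality to reduce to the single statement $\Ext_R^i(M,N)\cong H_\m^{i-1}(\Tr M\otimes_RN)$, and prove the latter via two hypercohomology spectral sequences for $\mathbf{R}\Gamma_\m$ applied to the truncated complex $D^\bullet$. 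This is correct: your identification $H^i(D^\bullet)\cong\Ext_R^{i+1}(M,N)$ follows because $\Tr M\otimes_RN=\Cok(P_0^*\otimes N\to P_1^*\otimes N)$ and $P_\bullet^*\otimes N\cong\Hom_R(P_\bullet,N)$; the two degenerations follow from finite length of $\Ext_R^{>0}(M,N)$ and from $D^q\cong N^{\oplus r_q}$ being maximal Cohen--Macaulay for $q\ge1$. One point you should make explicit is that $\Ext_R^{d+1-i}(\Tr M\otimes_RN,\omega)$ has finite length (so that the local-duality step $\Ext_R^{d+1-i}(\Tr M\otimes_RN,\omega)\cong H_\m^{i-1}(\Tr M\otimes_RN)^\vee$ does not require completeness of $R$); this holds because $(\Tr M\otimes_RN)_\p$ is maximal Cohen--Macaulay for every $\p\ne\m$ under your hypotheses, by the same case split you used for $\Ext_R^i(M,N)$. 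The trade-off: the paper's induction is elementary and avoids hypercohomology, while your argument is more conceptual, handles all $i$ simultaneously, and makes the role of the complex $D^\bullet$ transparent.
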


\begin{proof}
We claim that $E:=\Ext^j_R(\Tr M,N^\dag)$ has finite length for all $j\ge d$.
Indeed, let $\p$ be a non-maximal prime ideal.
As $\NF(M)\cap\NF(N)\subseteq\{\m\}$, either $M_\p$ or $N_\p$ is free.
If $M_\p$ is free, then so is $\Tr M_\p$, and $E_\p=0$.
If $N_\p$ is free, then $(N^\dag)_\p$ is a direct sum of copies of $\omega_\p$, which has injective dimension at most $\dim R_\p<d$, and hence $E_\p=0$.

(1) By assumption, $\Tor_i^R(\Tr M,N)$ has finite length for all $i>0$.
There are isomorphisms
$$
\lhom_R(M,N)
\cong\Tor_1^R(\Tr M,N)
\cong\Ext^d_R(\Tor_1(\Tr M,N),\omega)^\vee
\cong\Ext^{d+1}_R(\Tr M,N^\dag)^\vee,
$$
where the first two isomorphisms are obtained by \cite[Lemma (3.9)]{Yo} and \cite[Corollary 3.5.9]{BH}, respectively, while the last isomorphism is shown to hold by the spectral sequence argument in \cite[2.4]{ACST}.
By the above claim we see that $\lhom_R(M,N)^\vee\cong\Ext^{d+1}_R(\Tr M,N^\dag)^{\vee\vee}\cong\Ext^{d+1}_R(\Tr M,N^\dag)$ by \cite[Proposition 3.2.12(c)]{BH}.

(2) We show the assertion by induction on $i$.
There are isomorphisms
$$
\Ext_R^1(M,N)
\cong\Gamma_\m(\Ext_R^1(M,N))
\cong\Gamma_\m(\Tr M\otimes_RN)
\cong\Ext_R^d(\Tr M\otimes_RN,\omega)^\vee
\cong\Ext_R^d(\Tr M,N^\dag)^\vee.
$$
Let us explain each of the above isomorphisms.
As $\NF(M)\cap\NF(N)\subseteq\{\m\}$ and $M$ is locally totally reflexive on the punctured spectrum, $\Ext^1_R(M,N)$ has finite length.
Hence the first isomorphism holds.
There is an exact sequence $0\to\Ext^1_R(M,N)\to\Tr M\otimes_RN\to\Hom_R((\Tr M)^*,N)$ by \cite[Proposition (2.6)]{AB}, while $\Hom_R((\Tr M)^*,N)$ has positive depth by \cite[Exercise 1.4.19]{BH}.
Thus the second isomorphism holds.
The third and fourth isomorphisms follow from \cite[Corollary 3.5.9]{BH} and Lemma \ref{l1}, respectively.
Consequently, $\Ext_R^1(M,N)^\vee$ is isomorphic to $\Ext_R^d(\Tr M,N^\dag)^{\vee\vee}$, which is isomorphic to $\Ext_R^d(\Tr M,N^\dag)$ by the claim given at the beginning of the proof and \cite[Proposition 3.2.12(c)]{BH}.
Thus the case $i=1$ is settled.

Let $i\ge2$.
Applying the induction hypothesis to $\syz M$, we obtain isomorphisms
$$
\Ext_R^i(M,N)^\vee\cong\Ext_R^{i-1}(\syz M,N)^\vee\cong\Ext_R^{d+1-(i-1)}(\Tr\syz M,N^\dag)\cong\Ext_R^{d+1-i}(\syz\Tr\syz M,N^\dag).
$$
There is an exact sequence $0\to\Ext_R^1(M,R)\to\Tr M\to\syz\Tr\syz M\to0$ by \cite[Proposition (2.6)]{AB}.
Since $\Ext_R^1(M,R)$ has finite length and $N^\dag$ is maximal Cohen--Macaulay, $\Ext_R^j(\Ext_R^1(M,R),N^\dag)=0$ for all $j<d$.
We obtain an isomorphism $\Ext_R^{d+1-i}(\syz\Tr\syz M,N^\dag)\cong\Ext_R^{d+1-i}(\Tr M,N^\dag)$, which completes the proof.
\end{proof}

Theorem \ref{14} not only recovers but also refines a main result of \cite{CT} (more precisely, \cite[Theorem 1.1]{CT}), where $M$ is assumed to be locally free on the punctured spectrum.

\begin{cor}[Celikbas--Takahashi, improved]\label{ct}
Let $(R,\m)$ be a Cohen--Macaulay local ring with a canonical module.
Let $X$ be a totally reflexive $R$-module, and let $M$ be a maximal Cohen--Macaulay $R$-module.
Assume that $\NF(X)\cap\NF(M)\subseteq\{\m\}$.
Then for each $i\in\Z$ there is an isomorphism
$$
\cext_R^i(X,M)^\vee\cong\cext_R^{(d-1)-i}(X^\ast,M^\dag).
$$
\end{cor}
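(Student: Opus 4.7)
The plan is to derive the corollary from Theorem \ref{14}(1) by the syzygy-shifting argument used in the proof of Corollary \ref{ards}. For each integer $j$, the module $Y:=\syz^j X$ is totally reflexive (since totally reflexive modules admit complete resolutions, so both positive and negative syzygies make sense in the stable category), and $\NF(Y)\cap\NF(M)\subseteq\NF(X)\cap\NF(M)\subseteq\{\m\}$ because a localized syzygy sequence of $X_\p$ splits whenever $X_\p$ is free. Thus $Y$ and $M$ satisfy the hypotheses of Theorem \ref{14}(1), which yields
$$
\lhom_R(\syz^j X,M)^\vee\cong\Ext^{d+1}_R(\Tr(\syz^j X),M^\dag).
$$

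Next I would rewrite each side in terms of Tate cohomology of the original pairs $(X,M)$ and $(X^*,M^\dag)$. On the left, $\lhom_R(Y,M)=\cext^0_R(Y,M)$ for totally reflexive $Y$, and the syzygy-shift identity $\cext^p_R(\syz^j Y,M)\cong\cext^{p+j}_R(Y,M)$ (obtained from the vanishing of Tate cohomology on projectives together with the long exact sequences associated to $0\to\syz^k X\to P\to\syz^{k-1}X\to 0$) gives
$$
\lhom_R(\syz^j X,M)\cong\cext^j_R(X,M).
$$
On the right, two stable identities for totally reflexive $Y$ are needed: $(\syz^j Y)^*\cong\syz^{-j}(Y^*)$, obtained by dualizing the syzygy sequences and using $\Ext^{\ge1}_R(Y,R)=0$; and $\Tr Y\cong\syz^{-2}(Y^*)$, obtained by comparing the minimal presentation of $Y$ with its dualized second syzygy sequence. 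Combining these, $\Tr(\syz^j X)\cong\syz^{-(j+2)}(X^*)$ stably, and because $d+1\ge1$ with $X^*$ totally reflexive one obtains
$$
\Ext^{d+1}_R(\Tr(\syz^j X),M^\dag)\cong\cext^{d+1}_R(\syz^{-(j+2)}(X^*),M^\dag)\cong\cext^{d-1-j}_R(X^*,M^\dag).
$$

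Putting the two computations together and setting $i=j$, the result is $\cext^i_R(X,M)^\vee\cong\cext^{(d-1)-i}_R(X^*,M^\dag)$ for every $i\in\Z$, uniformly in all dimensions $d\ge0$. The main obstacle is the careful book-keeping needed to justify the two stable-category identifications above together with the negative-syzygy shift formulas; these are standard consequences of the existence of complete resolutions for totally reflexive modules, but must be handled carefully to avoid sign or degree mismatches. Alternatively, one could bypass the $\Tr$-calculation by applying the shift trick directly to the ``in particular'' form $\lhom_R(Y,M)^\vee\cong\Ext^{d-1}_R(Y^*,M^\dag)$ of Theorem \ref{14}(1) when $d\ge2$, and treat the cases $d=0,1$ separately.
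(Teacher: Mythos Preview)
Your proposal is correct and follows essentially the same approach as the paper's own proof: both apply Theorem \ref{14}(1) to the pair $(\syz^iX,M)$, rewrite the left side as $\cext^i_R(X,M)$, and reduce the right side via the stable identity $\Tr\syz^iX\cong\syz^{-i}\Tr X$ (equivalently your $\Tr(\syz^jX)\cong\syz^{-(j+2)}(X^*)$, since $X^*\cong\syz^2\Tr X$). The only difference is presentational: the paper cites \cite[Proposition 7.1]{catgp} for this identity and rewrites both sides as $\lhom_R(-,M^\dag)$ before comparing, whereas you spell out the syzygy/transpose manipulations and stay in Tate cohomology.
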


\begin{proof}
We have the following isomorphisms, where the second one follows from Theorem \ref{14}(1) and the fact that the Ext module has finite length (use \cite[Proposition 3.2.12(c)]{BH}).
\begin{align*}
\cext^i_R(X,M)^\vee
&\cong\lhom_R(\syz^i X,M)^\vee
\cong\Ext^{d+1}_R(\Tr\syz^iX,M^\dag)
\cong\lhom_R(\syz^{d+1}\Tr\syz^iX,M^\dag),\\
\cext^{(d-1)-i}_R(X^\ast,M^\dag)
&\cong\lhom_R(\syz^{(d-1)-i}\syz^2\Tr X,M^\dag)
\cong\lhom_R(\syz^{d+1-i}\Tr X,M^\dag).
\end{align*}
It suffices to show $\Tr\syz^iX\cong\syz^{-i}\Tr X$ up to free summands, which follows from \cite[Proposition 7.1]{catgp}.
\end{proof}

It is evident that Corollary \ref{ct} implies Corollary \ref{ard}.
Thus our Theorem \ref{14} (Theorem \ref{t}) also recovers the Auslander--Reiten duality theorem (for any dimension).

\section{Testing freeness by vanishing of Ext modules}

In this section, we present applications of our results obtained in the previous section to give criteria for freeness of modules over local rings and to recover various known results about the Auslander--Reiten conjecture.
To give our first application, we establish a lemma, which will also be used later.

\begin{lem}\label{22}
Let $M$, $N$ be $R$-modules.
Suppose that $M$ is reflexive (i.e. $2$-torsionfree) and that $\NF(M)\cap\NF(N)\subseteq\Y^2(R)$.
Then one has an isomorphism $\Hom_R(M,N)^*\cong\Hom_R(N,M)$.
\end{lem}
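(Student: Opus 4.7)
The plan is to reduce the identity to an isomorphism involving the natural evaluation map $M^{\ast}\otimes_{R}N\to\Hom_{R}(M,N)$ and then invoke tensor–Hom adjunction together with the reflexivity hypothesis.

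First I would apply Lemma~\ref{21} with $K=R$ and $n=2$. The assumption $\NF(M)\cap\NF(N)\subseteq\Y^{2}(R)$ is exactly the hypothesis of that lemma, so part~(2) (applied with the allowed range $i\le n-2=0$) yields an isomorphism
$$
\Hom_{R}(M,N)^{\ast}\;\cong\;(M^{\ast}\otimes_{R}N)^{\ast}.
$$
In other words, the $R$-dual kills the kernel and cokernel of the Auslander--Bridger map $f\colon M^{\ast}\otimes_{R}N\to\Hom_{R}(M,N)$, since those subquotients are supported inside $\NF(M)\cap\NF(N)\subseteq\Y^{2}(R)$ and therefore have positive grade.

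Next I would use the standard tensor–Hom adjunction, which gives a natural isomorphism
$$
(M^{\ast}\otimes_{R}N)^{\ast}\;=\;\Hom_{R}(M^{\ast}\otimes_{R}N,R)\;\cong\;\Hom_{R}\bigl(N,\Hom_{R}(M^{\ast},R)\bigr)\;=\;\Hom_{R}(N,M^{\ast\ast}).
$$
Finally, because $M$ is reflexive (i.e.\ $2$-torsionfree), the biduality map $M\to M^{\ast\ast}$ is an isomorphism, so $\Hom_{R}(N,M^{\ast\ast})\cong\Hom_{R}(N,M)$. Splicing the three isomorphisms together produces the desired $\Hom_{R}(M,N)^{\ast}\cong\Hom_{R}(N,M)$.

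There is no genuine obstacle here; the only subtlety worth checking is that the isomorphism in Lemma~\ref{21}(2) for $i=0$ really is the one induced by the natural map $f$, so that chaining it with the adjunction isomorphism is legitimate. This is immediate from the construction in the proof of Lemma~\ref{21}, where $f$ is the map whose kernel and cokernel are shown to vanish after applying $\Hom_{R}(-,R)$ in low degrees.
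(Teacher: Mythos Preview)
Your proposal is correct and follows precisely the paper's own route: the paper's proof reads ``Let $n=2$, $i=0$ and $K=R$ in Lemma~\ref{21}(2), and then use the tensor--hom adjunction,'' which is exactly your argument (with the reflexivity step $M^{**}\cong M$ left implicit there). Your added remark about naturality is harmless extra care; the approaches are the same.
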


\begin{proof}
Let $n=2$, $i=0$ and $K=R$ in Lemma \ref{21}(2), and then use the tensor-hom adjunction.
\end{proof}

\begin{cor}\label{5}
Let $R$ be a local ring, and let $M$ be a $(t+1)$st syzygy $R$-module.
Assume that $M$ is locally free on $\X^n(R)$ for some $0<n<t$.
Then $M$ is free, provided that one of the following three conditions is satisfied:
\begin{enumerate}[\rm(1)]
\item
$\Ext_R^i(\Hom_R(M,M),R)=\Ext_R^j(M,M)=0$ for all $n+1\le i\le t$ and $n\le j\le t-1$.
\item
$\Ext_R^j(M,M)=0$ for all $n\le j\le t-1$ and $\gdim_R\Hom_R(M,M)<\infty$.
\item
$\Ext_R^j(M,M)=0$ for all $n\le j\le t-1$ and $\Hom_R(M,M)$ is $(t+2)$-torsionfree.
\end{enumerate}
\end{cor}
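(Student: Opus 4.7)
The plan is to bootstrap the hypothesis ``$M$ is locally free on $\X^n(R)$'' up to ``$M$ is locally free on $\X^t(R)$''; since $\m\in\X^t(R)$ (as $\depth R_\m=t$), this forces $M=M_\m$ to be $R$-free. The lifting will proceed one depth level at a time by localizing at a prime of the next depth and invoking Corollary~\ref{12}(1). As preparation, since $M$ is a $(t+1)$st syzygy locally of finite G-dimension on $\X^n(R)$, Example~\ref{15}(3) makes $M$ into an $(n+1)$-torsionfree, hence reflexive, module. Moreover, $\NF(M)\subseteq\Y^{n+1}(R)\subseteq\Y^2(R)$, so Lemma~\ref{22} yields $\Hom_R(M,M)\cong\Hom_R(M,M)^*$. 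Hence $\Hom_R(M,M)$ is an $R$-dual and its localization at any prime of depth $\geq 2$ is an $R_\p$-dual of depth $\geq 2$.

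Before the induction I reduce (3) to (1): when $\Hom_R(M,M)$ is $(t+2)$-torsionfree, the Auslander--Bridger identity $\Ext^i_R(X^*,R)\cong\Ext^{i+2}_R(\Tr X,R)$ for $i\geq 1$ (with $X=\Hom_R(M,M)$) gives $\Ext^i_R(\Hom_R(M,M)^*,R)=0$ for $1\leq i\leq t$, and the self-duality above converts this into the Ext vanishing required by (1). The induction step then reads: for $n\leq m\leq t-1$, if $M$ is locally free on $\X^m(R)$, then it is locally free on $\X^{m+1}(R)$. To prove it, fix $\p$ with $\depth R_\p=m+1$ and localize there. Then $M_\p$ is still a $(t+1)$st syzygy over $R_\p$ and is locally free on $\X^m(R_\p)$, so Example~\ref{15}(3) (with its index equal to $m+1$) makes $M_\p$ an $(m+2)$-torsionfree $R_\p$-module with $\NF(M_\p)\subseteq\Y^{m+1}(R_\p)=\Y^{\depth R_\p}(R_\p)$. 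Applying Corollary~\ref{12}(1) to $M_\p$ over $R_\p$ requires the two vanishings $\Ext^m_{R_\p}(M_\p,M_\p)=0$ and $\Ext^{m+1}_{R_\p}(\Hom_{R_\p}(M_\p,M_\p),R_\p)=0$. The first is the localization of $\Ext^m_R(M,M)=0$, valid in each of (1)--(3) (the range $n\leq j\leq t-1$ contains $j=m$). The second is the localization of $\Ext^{m+1}_R(\Hom_R(M,M),R)$: it holds directly from (1) (the range $n+1\leq i\leq t$ contains $i=m+1$), from (3) via the reduction just made, and under (2) from the Auslander--Bridger formula $\gdim_{R_\p}\Hom_{R_\p}(M_\p,M_\p)\leq(m+1)-2=m-1$ (using finite G-dimension plus the depth bound above). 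Therefore $\lhom_{R_\p}(M_\p,M_\p)=0$, so the identity of $M_\p$ factors through a free module and $M_\p$ is free.

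Iterating the step from $m=n$ through $m=t-1$ yields $M$ locally free on $\X^t(R)$, and hence free. The main obstacle is the uniform treatment of the three rather different-looking hypotheses; the crux is that the self-duality of $\Hom_R(M,M)$ supplied by Lemma~\ref{22} is precisely what lets the depth-based hypothesis (2) and the torsionfreeness-based hypothesis (3) each be channelled into the same Corollary~\ref{12}(1) template as the direct Ext-vanishing hypothesis (1).
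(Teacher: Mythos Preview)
Your proof is correct and follows essentially the same strategy as the paper's: both climb from local freeness on $\X^n(R)$ up to the maximal ideal by repeated localization and appeal to Corollary~\ref{12}, and both reduce (3) to (1) through the self-duality $\Hom_R(M,M)\cong\Hom_R(M,M)^*$ supplied by Lemma~\ref{22}. The only cosmetic difference is that the paper handles case~(2) by invoking Corollary~\ref{12}(2) directly, whereas you first convert the finite G-dimension hypothesis into the needed Ext vanishing via the Auslander--Bridger formula and then feed it into Corollary~\ref{12}(1), giving a single uniform template for all three cases.
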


\begin{proof}
First of all, note that the inequalities $0<n<t$ especially say that $t\ge2$.

(1) We use induction on $s=t-n$.
If $s=1$, then $n=t-1$.
Note from Example \ref{15}(3) that $M$ is $(t+1)$-torsionfree.
Corollary \ref{12}(1) implies $\lhom_R(M,M)=0$, which is equivalent to saying that $M$ is free.
Let $s\ge2$.

We claim that $M$ is locally free on $\X^{t-1}(R)$.
Indeed, pick any prime ideal $\p\in\X^{t-1}(R)$ and set $t'=\depth R_\p$.
If $t'\le n$, then $\p\in\X^n(R)$, and $M_\p$ is $R_\p$-free by assumption.
If $t'>n$, then $0<n<t'$ and $M_\p$ is locally free on $\X^n(R_\p)$.
As $t'\le t$, the module $M_\p$ is a $(t'+1)$st syzygy $R_\p$-module, and $\Ext_{R_\p}^i(\Hom_{R_\p}(M_\p,M_\p),R_\p)=\Ext_{R_\p}^j(M_\p,M_\p)=0$ for all $n+1\le i\le t'$ and $n\le j\le t'-1$.
Since $t'-n<s$, we can apply the induction hypothesis to deduce that $M_\p$ is $R_\p$-free.
Thus the claim follows.

By Example \ref{15}(3) the module $M$ is $(t+1)$-torsionfree.
Corollary \ref{12}(1) shows $\lhom_R(M,M)=0$, so $M$ is free.
	
(2) Similarly to the proof of (1), one can prove that $M$ is $(t+1)$-torsionfree and locally free on $\X^{t-1}(R)$.
As $M$ is a $(t+1)$st syzygy module and $t\ge2$, the depth lemma shows that $M$ has depth at least two, and so does $\Hom_R(M,M)$ by \cite[Exercise 1.4.19]{BH}.
Now the assertion follows from Corollary \ref{12}(2).

(3) Again, one can show that $M$ is $(t+1)$-torsionfree.
In particular, $M$ is $2$-torsionfree.
Set $H=\Hom_R(M,M)$.
Lemma \ref{22} implies $H\cong H^*$.
As $H$ is $(t+2)$-torsionfree, we have $\Ext^i_R(H,R)\cong\Ext^i_R(H^*,R)\cong\Ext^{i+2}_R(\Tr H,R)=0$ for $1\leq i\leq t$.
Now the assertion follows from (1).
\end{proof}

We record various consequences of Corollary \ref{5}.
First of all, Corollary \ref{5}(2) immediately recovers the theorems of Araya \cite[Corollary 10]{A}, Ono and Yoshino \cite[Theorem]{OnYos}, and the following result due to Araya, Celikbas, Sadeghi and Takahashi \cite[Corollary 1.6]{ACST}.

\begin{cor}[Araya--Celikbas--Sadeghi--Takahashi]
Let $R$ be a Gorenstein local ring.
Let $M$ be a maximal Cohen--Macaulay $R$-module, and $0<n<d$ an integer.
Suppose that $M$ is locally free on $\X^n(R)$ and $\Ext_R^i(M,M)=0$ for all $n\le i\le d-1$.
Then $M$ is free.
\end{cor}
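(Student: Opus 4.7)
The plan is to derive this as a direct specialization of Corollary \ref{5}(2). Since $R$ is Gorenstein, its depth equals its dimension, so in the notation of Corollary \ref{5} we set $t=d$. The hypotheses $0<n<d$ and locally free on $\X^n(R)$ match verbatim, and the Ext-vanishing hypothesis $\Ext_R^i(M,M)=0$ for $n\le i\le d-1$ matches the range $n\le j\le t-1$.

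Next I would verify the two structural hypotheses on $M$ that Corollary \ref{5}(2) demands. Over a Gorenstein local ring, every maximal Cohen--Macaulay module is totally reflexive; in particular, $M$ is an $i$th syzygy module for every $i\ge 0$, so in particular a $(t+1)=(d+1)$st syzygy. For the condition $\gdim_R\Hom_R(M,M)<\infty$, I would invoke the standard fact that over a Gorenstein local ring every finitely generated module has finite G-dimension (since $\gdim_R(-)$ is bounded above by $\pd_R(-)$ over regular localizations and $R$ itself is totally reflexive over itself, or simply by the Auslander--Bridger formula applied to the Gorenstein case).

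With these two checks in hand, Corollary \ref{5}(2) applies directly and gives that $M$ is free. Since essentially no step requires a calculation beyond noting two well-known properties of Gorenstein local rings, there is no genuine obstacle; the corollary is just a packaging of Corollary \ref{5}(2) under the hypothesis of Gorensteinness. The only minor subtlety worth mentioning is checking that the depth condition on $R$ (so that $t=d$ makes sense and the syzygy count is correct) and the MCM hypothesis on $M$ combine to give $M$ as a $(d+1)$st syzygy; this is where Gorenstein is used in an essential way, as opposed to the weaker Cohen--Macaulay hypothesis.
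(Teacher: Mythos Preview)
Your proposal is correct and follows exactly the route the paper intends: the paper states that this corollary is immediately recovered by Corollary \ref{5}(2), and your checks that $t=d$, that a maximal Cohen--Macaulay module over a Gorenstein ring is totally reflexive (hence an arbitrary syzygy), and that every module over a Gorenstein ring has finite G-dimension are precisely the verifications needed. The only quibble is that your first justification for finite G-dimension is a bit garbled; the clean statement is simply that a local ring is Gorenstein if and only if every finitely generated module has finite G-dimension, which you do mention as the alternative.
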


Recall that an $R$-module $M$ is called {\em torsion-free} if the natural map $M\to M\otimes_R\Q(R)$ is injective, or in other words, each non-zerodivisor of $R$ is also a non-zerodivisor of $M$.

\begin{cor}\label{l}
Let $R$ be a Cohen--Macaulay local ring of dimension $d$ with a canonical module $\omega$.
Assume that $M$ is a $(d+1)$st syzygy $R$-module that is locally free in codimension one.
Suppose that $\Ext^i_R(M,M)=0$ for $1\leq i\leq d-1$ and $\Hom_R(M,M)\otimes_R\omega_R$ is torsion-free.
Then $M$ is free.
\end{cor}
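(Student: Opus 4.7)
The plan is to apply Corollary~\ref{5} with $n=1$ and $t=d$ to the module $M$. I focus on the main case $d\ge 2$, so that the requirement $0<n<t$ is met. Since $M$ is a $(d+1)$st syzygy in a Cohen--Macaulay ring of dimension $d$, $M$ is maximal Cohen--Macaulay, and ``locally free in codimension one'' is precisely ``locally free on $\X^1(R)$'' for Cohen--Macaulay $R$. The assumed vanishing $\Ext^i_R(M,M)=0$ for $1\le i\le d-1$ supplies the required range $n\le i\le t-1$ of Corollary~\ref{5}. It therefore remains to verify one of the three supplementary hypotheses of Corollary~\ref{5}; I aim for part~(2), namely $\gdim_R H<\infty$ where $H:=\Hom_R(M,M)$.

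A depth-lemma chase on $\Hom_R(-,M)$ applied to a free resolution of $M$, using $\Ext^i_R(M,M)=0$ for $1\le i\le d-1$, shows $H$ is maximal Cohen--Macaulay. Also, Lemma~\ref{22} (with $N=M$, using reflexivity of $M$ and $\NF(M)\subseteq\Y^2(R)$) gives $H\cong H^\ast$.

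The core of the proof is upgrading the hypothesis that $H\otimes_R\omega$ is torsion-free to finite G-dimension of $H$. First I would induct on $d$ to establish that $M$ is locally free on the punctured spectrum. At any non-maximal $\p\in\spec R$ the ring $R_\p$ is Cohen--Macaulay of dimension strictly less than $d$ with canonical module $\omega_\p$; the Ext vanishings and the condition ``$(d+1)$st syzygy'' localize automatically, while torsion-freeness of $(H\otimes_R\omega)_\p\cong H_\p\otimes_{R_\p}\omega_\p$ is preserved under localization. By the inductive hypothesis $M_\p$ is $R_\p$-free, so $M$ (and hence $H$) is locally free on the punctured spectrum. Consequently every $\Tor^R_i(H,\omega)$ for $i\ge 1$ has finite length.

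The main obstacle is the final step. Starting from a short exact sequence $0\to\syz H\to R^{\oplus r}\to H\to 0$ and tensoring with $\omega$ produces the four-term exact sequence
\[
0\to\Tor^R_1(H,\omega)\to\syz H\otimes_R\omega\to\omega^{\oplus r}\to H\otimes_R\omega\to 0.
\]
A depth-lemma chase combining torsion-freeness of $H\otimes_R\omega$, the maximal Cohen--Macaulayness of $\omega^{\oplus r}$, and the finite length of $\Tor^R_1(H,\omega)$ should force $\Tor^R_1(H,\omega)=0$; iterating with $\syz H$ in place of $H$ (which retains the relevant properties) yields $\Tor^R_i(H,\omega)=0$ for every $i\ge 1$ and upgrades $H\otimes_R\omega$ from torsion-free to maximal Cohen--Macaulay. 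The derived adjunction
\[
\rhom_R(H\otimes^{\mathbf{L}}_R\omega,\omega)\simeq\rhom_R(H,\rhom_R(\omega,\omega))\simeq\rhom_R(H,R),
\]
together with $\Ext^i_R(H\otimes_R\omega,\omega)=0$ for $i\ge 1$ (local duality for MCM modules), then delivers $\Ext^i_R(H,R)=0$ for all $i\ge 1$. Hence $H$ is totally reflexive, so $\gdim_R H=0$, and Corollary~\ref{5}(2) applies to give that $M$ is free.
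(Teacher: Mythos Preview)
Your reduction by induction on $d$ to the case where $M$ (and hence $H=\Hom_R(M,M)$) is locally free on the punctured spectrum is correct and matches the paper. The gap lies in the step where you claim a depth chase on the four-term sequence
\[
0\to\Tor^R_1(H,\omega)\to\syz H\otimes_R\omega\to\omega^{\oplus r}\to H\otimes_R\omega\to 0
\]
forces $\Tor^R_1(H,\omega)=0$. Splitting into short exact sequences, you get $\depth K\ge\min\{d,2\}$ for the image $K$ of the middle map, but from $0\to\Tor^R_1(H,\omega)\to\syz H\otimes_R\omega\to K\to 0$ you can only conclude $\Tor^R_1(H,\omega)=0$ if you already know $\syz H\otimes_R\omega$ has positive depth. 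Nothing in your hypotheses gives this: the property ``$(-)\otimes_R\omega$ is torsion-free'' does not pass to syzygies for free, so your proposed iteration is circular. Consequently the derived-adjunction step never gets off the ground, and total reflexivity of $H$ is not established.

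The paper avoids this entirely by aiming for Corollary~\ref{5}(1) rather than (2). Once $H$ is locally free on the punctured spectrum, Lemma~\ref{l1} applied with the roles $M=H$ and $N=\omega$ (so $N^\dag=R$) gives directly
\[
\Ext^i_R(H,R)\cong\Ext^i_R(H\otimes_R\omega,\omega)\quad\text{for }0\le i\le d,
\]
with no Tor-vanishing needed. Local duality and the single hypothesis $\depth(H\otimes_R\omega)\ge 1$ then yield $\Ext^d_R(H,R)=0$. Since after the induction one may take $n=d-1$ in Corollary~\ref{5}(1), this lone vanishing (together with $\Ext^{d-1}_R(M,M)=0$) suffices. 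So you were working much harder than necessary: the torsion-freeness of $H\otimes_R\omega$ is only ever used to kill one top-degree Ext, not to produce finite G-dimension.
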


\begin{proof}
First of all, since $R$ satisfies $(\S_1)$, an $R$-module is torsion-free if and only if it satisfies $(\S_1)$.
Hence the torsion-free property localizes.
We argue by induction on $d$.
The assertion is evident if $d\leq1$, so we may assume that $d>1$ and $M$ is locally free on the punctured spectrum of $R$.
Set $H=\Hom_R(M,M)$.
As $H$ is locally free on the punctured spectrum, by Lemma \ref{l1} we have $\Ext^i_R(\omega\otimes_RH,\omega)\cong\Ext^i_R(H,R)$ for all $0\le i\le d$.
Since $\omega\otimes_RH$ has positive depth, $\Ext_R^d(H,R)=0$ by local duality.
The assertion is deduced from Corollary \ref{5}(1).	
\end{proof}

The following two results are also consequences of Corollary \ref{5}.
The first one is nothing but Corollary \ref{cc} from the Introduction.
Both should be compared with the theorems of Huneke and Leuschke \cite[Theorem 1.3]{HL}, Goto and Takahashi \cite[Theorem 1.5(2)]{GT}, and Dao, Eghbali and Lyle \cite[Theorem 3.16]{DEL}.

\begin{cor}\label{c3}
Let $R$ be a generically Gorenstein local ring, and let $0<n<t$ be an integer.
Let $M$ be an $R$-module which is locally free on $\X^n(R)$.
Then $M$ is free, if the following three conditions hold.
\begin{enumerate}[\rm(1)]
\item
$\Ext^i_R(M,M)=0$ for all $n\leq i\leq t-1$.
\item
$\Ext^i_R(M,R)=0$ for all $i>0$.
\item $\Ext^i_R(\Hom_R(M,M),R)=0$ for all $n+1\leq i\leq t$, or $\Hom_R(M,M)$ has finite G-dimension.
\end{enumerate}
\end{cor}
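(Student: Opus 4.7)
The approach is to reduce Corollary \ref{c3} to Corollary \ref{5}. Comparing hypotheses, condition (1) of Corollary \ref{c3} combined with either alternative of (3) matches, respectively, condition (1) or (2) of Corollary \ref{5}, and the local freeness assumption on $\X^n(R)$ with $0<n<t$ is identical in both statements. Hence, once $M$ is known to be a $(t+1)$st syzygy, Corollary \ref{5} gives the desired conclusion.

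The central step is to show $M$ is a $(t+1)$st syzygy using that $R$ is generically Gorenstein and $\Ext^i_R(M,R)=0$ for all $i>0$. I would take a free resolution $P_\bullet\to M^*\to 0$ and dualize to obtain the complex
\[
0\to M^{**}\to P_0^*\to P_1^*\to\cdots,
\]
whose cohomology at $P_i^*$ is $\Ext^i_R(M^*,R)$. If $M$ is reflexive and $\Ext^i_R(M^*,R)=0$ for $1\le i\le t$, this realizes $M\cong M^{**}$ as a $(t+1)$st syzygy. For reflexivity, I would use the Auslander transpose exact sequence $0\to\Ext^1_R(\Tr M,R)\to M\to M^{**}\to\Ext^2_R(\Tr M,R)\to 0$: at every $\p\in\Ass R$ the local ring $R_\p$ is Artinian Gorenstein and hence self-injective, so $M_\p\cong M_\p^{**}$ and every higher $\Ext_{R_\p}(M_\p^*,R_\p)$ vanishes automatically. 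Combining these local facts with the given global vanishing of $\Ext^i_R(M,R)$ and with the identifications $\Ext^{j+2}_R(\Tr M,R)\cong\Ext^j_R(M^*,R)$ for $j\ge 1$ should propagate this to the required global vanishings.

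The main obstacle is this syzygy step. The generic Gorenstein hypothesis only controls behavior at $\Ass R$, while being a $(t+1)$st syzygy is a global property, so upgrading the local vanishings to global vanishings of $\Ext^i_R(\Tr M,R)$ for $i=1,2$ and of $\Ext^i_R(M^*,R)$ for $1\le i\le t$ will require careful use of the full strength of the hypothesis $\Ext^i_R(M,R)=0$ for all $i>0$, likely together with Ischebeck-type depth arguments. Once this is in place, Corollary \ref{5}(1) (if the Ext-vanishing alternative of (3) holds) or Corollary \ref{5}(2) (if the finite G-dimension alternative holds) concludes that $M$ is free.
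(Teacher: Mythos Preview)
Your reduction to Corollary \ref{5} is exactly the paper's strategy, so the overall plan is correct. The divergence is entirely in the syzygy step, and there your sketch does not actually close the gap you yourself flag.

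The paper does not aim for ``$(t+1)$st syzygy'' directly. Instead it invokes a result of Yoshino \cite[Corollary 1.3]{Y}: for a generically Gorenstein ring, the vanishing $\Ext^{>0}_R(M,R)=0$ forces $M$ to be \emph{totally reflexive}. A totally reflexive module is a $j$th syzygy for every $j\ge0$, so the hypothesis of Corollary \ref{5} is met with room to spare, and parts (1) and (2) of that corollary finish the proof.

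Your proposed route---showing reflexivity and $\Ext^i_R(M^*,R)=0$ for $1\le i\le t$ by hand from the generic behaviour at $\Ass R$---is essentially an attempt to reprove a piece of Yoshino's theorem. The tools you name will not do it: knowing that $\Ext^i_R(\Tr M,R)$ and $\Ext^i_R(M^*,R)$ vanish after localizing at each $\p\in\Ass R$ only says these modules have no support in $\Ass R$, which is far from global vanishing; and Ischebeck's inequality controls $\Ext^i_R(-,R)$ in terms of $\depth R$ and dimensions, so it cannot distinguish $M^*$ from an arbitrary module. The genuine arguments here (Yoshino's, or related ones in the literature on G-dimension) pass through total acyclicity of the dualized free resolution $0\to M^*\to F_0^*\to F_1^*\to\cdots$, not through grade or depth bounds. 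So either cite the result as the paper does, or be prepared to reproduce that acyclicity argument in full.
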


\begin{proof}
It follows from generic Gorensteinness, (2) and \cite[Corollary 1.3]{Y} that $M$ is a totally reflexive module, and a $j$th syzygy module for all $j\ge0$.
The assertion follows from the first and second assertions of Corollary \ref{5}.
\end{proof}

Note that every normal ring $R$ is generically Gorenstein and every syzygy $R$-module is locally free on $\X^1(R)$.
Thus the following is immediate from Corollary \ref{c3}, which is nothing but Corollary \ref{abc} from the Introduction.

\begin{cor}\label{ccc}
Let $R$ be a normal local ring.
An $R$-module $M$ is free if either
\begin{enumerate}[\rm(1)]
\item
$\Ext^i_R(M,M)=\Ext^j_R(M,R)=\Ext^h_R(\Hom_R(M,M),R)=0$ for all $1\le i\le t-1$, $j\ge1$ and $2\le h\le t$, or
\item
$\Ext^i_R(M,M)=\Ext^j_R(M,R)=0$ for all $1\le i\le t-1$ and $j\ge1$, and that $\Hom_R(M,M)$ has finite G-dimension.
\end{enumerate}
\end{cor}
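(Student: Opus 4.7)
The plan is to recognize Corollary \ref{ccc} as the $n=1$ case of Corollary \ref{c3}, after verifying that a normal local ring $R$ satisfies both standing hypotheses of the latter (generic Gorensteinness, and local freeness of $M$ on $\X^1(R)$). The first is immediate from Serre's $(R_1)$ criterion: $R_\p$ is regular, hence Gorenstein, for every $\p$ of height at most one, and in particular for every $\p\in\Ass R$.

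For the local freeness on $\X^1(R)$, I would first show that $M$ is itself a syzygy module. Under either hypothesis (1) or (2) of Corollary \ref{ccc} one has $\Ext^j_R(M,R)=0$ for all $j\ge 1$, and combining this with generic Gorensteinness, the same invocation of \cite[Corollary 1.3]{Y} used in the proof of Corollary \ref{c3} forces $M$ to be totally reflexive, hence a $j$th syzygy for every $j\ge 0$. It then suffices to identify $\X^1(R)$ for a normal ring: the $(S_2)$ half of normality gives $\depth R_\p\ge \min\{2,\height \p\}$, so any $\p$ with $\depth R_\p\le 1$ satisfies $\height \p\le 1$, and by $(R_1)$ such an $R_\p$ is regular. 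Consequently $M_\p$, being a syzygy over a regular local ring, is free, which is exactly the condition ``$M$ is locally free on $\X^1(R)$''.

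Once those two structural conditions are in place, the Ext-vanishing hypotheses of Corollary \ref{ccc} translate verbatim into those of Corollary \ref{c3} with $n=1$ (the ranges $1\le i\le t-1$ and $2\le h\le t$ match $n\le i\le t-1$ and $n+1\le h\le t$), and Corollary \ref{c3} delivers freeness whenever $t\ge 2$. The residual edge cases $t\in\{0,1\}$ are handled by hand: if $t=0$ then $R$ is zero-dimensional and normal, hence a field, so every module is free; if $t=1$ then $(S_2)$ forces $\dim R\le 1$, so $R$ is a DVR, and $\Ext^1_R(M,R)=0$ together with $\pd_R M\le 1$ already yields $M$ free. I do not expect any real obstacle here—the only non-routine step is the identification of $\X^1(R)$ with the primes of height at most one under $(R_1)+(S_2)$, and the rest is direct application of the previous corollary.
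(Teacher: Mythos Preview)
Your proposal is correct and follows exactly the paper's approach: the paper simply remarks that a normal ring is generically Gorenstein and that every syzygy module over it is locally free on $\X^1(R)$, then invokes Corollary \ref{c3} with $n=1$. Your version is more explicit---you spell out why $M$ is a syzygy (via total reflexivity from \cite[Corollary 1.3]{Y}, already used inside the proof of Corollary \ref{c3}) and you handle the low-depth cases $t\le 1$ that the paper leaves implicit---but the argument is the same.
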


Corollary \ref{ccc} recovers the following result due to Huneke and Leuschke \cite[Theorem 0.1]{HL} and Araya \cite[Corollary 4]{A}, which asserts that the Auslander--Reiten conjecture holds for normal Gorenstein rings.

\begin{cor}[Huneke--Leuschke, Araya]
Let $R$ be a (not necessarily local) Gorenstein normal ring.
Let $M$ be an $R$-module.
If $\Ext_R^i(M,M)=0=\Ext_R^i(M,R)$ for all $i>0$, then $M$ is projective.
\end{cor}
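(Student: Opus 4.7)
The plan is to reduce to the normal Gorenstein local case and then invoke Corollary \ref{ccc}(2) directly. Projectivity of a finitely generated module over a noetherian ring is a local property, so it suffices to prove that $M_\p$ is $R_\p$-free for every prime ideal $\p$ of $R$. Fix such a $\p$; then $R_\p$ is again a Gorenstein normal local ring, since both the Gorenstein and normal properties localize.

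Next I would verify that the hypotheses transfer to $R_\p$. Because $M$ is finitely generated and $R$ is noetherian, Ext commutes with localization, so $\Ext^i_{R_\p}(M_\p,M_\p)=\Ext^i_R(M,M)_\p=0$ and $\Ext^i_{R_\p}(M_\p,R_\p)=\Ext^i_R(M,R)_\p=0$ for all $i>0$. In particular the vanishing conditions $\Ext^i_{R_\p}(M_\p,M_\p)=0$ for $1\le i\le\depth R_\p-1$ and $\Ext^j_{R_\p}(M_\p,R_\p)=0$ for $j\ge 1$ demanded by Corollary \ref{ccc}(2) are satisfied. The remaining hypothesis is that $\Hom_{R_\p}(M_\p,M_\p)=\Hom_R(M,M)_\p$ has finite G-dimension over $R_\p$; but this is automatic, since every finitely generated module over a Gorenstein local ring has finite G-dimension (by Auslander--Bridger). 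Corollary \ref{ccc}(2) therefore applies and gives that $M_\p$ is $R_\p$-free.

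Since this works at every prime, $M$ is projective. There is no real obstacle: the bulk of the work has been performed in Corollary \ref{ccc}, and what remains is the standard global-to-local reduction together with the observation that the finite G-dimension hypothesis comes for free in the Gorenstein setting.
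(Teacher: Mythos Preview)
Your proposal is correct and follows exactly the route the paper intends: the paper offers no separate proof but simply states that Corollary~\ref{ccc} recovers this result, and your argument makes this derivation explicit by localizing, observing that the Gorenstein hypothesis guarantees finite G-dimension of $\Hom_{R_\p}(M_\p,M_\p)$, and then invoking Corollary~\ref{ccc}(2).
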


Now we deal with reflexive modules $M$ such that $\Hom_R(M,M)$ is a direct sum of copies of $M$.
Such modules were first studied by Auslander \cite{A3} to give a new proof of a theorem on the purity of the branch locus for regular local rings; see \cite[Proposition 1.2 and Theorem 1.3]{A3}.
The following result especially says that the Auslander--Reiten conjecture holds true for such modules over normal rings.

\begin{cor}\label{c}
Let R be a local ring with $t\ge2$, and let $M$ be a reflexive $R$-module that is locally free on $\X^1(R)$.
Assume that $\Hom_R(M,M)\cong M^{\oplus n}$ for some $n>0$.
If $\Ext^i_R(M,R)=\Ext_R^j(M,M)=0$ for all $1\le i\le t$ and $1\le j\le t-1$, then $M$ is free.
\end{cor}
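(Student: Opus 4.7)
The plan is to apply Corollary \ref{5}(1) with $n=1$; this choice is legitimate because $t\ge 2$ forces $0<1<t$. Of the three inputs required by that corollary---that $M$ be a $(t+1)$st syzygy module, that $\Ext_R^i(\Hom_R(M,M),R)=0$ for $2\le i\le t$, and that $\Ext_R^j(M,M)=0$ for $1\le j\le t-1$---the last is given outright, and the middle condition is immediate from the hypothesis $\Hom_R(M,M)\cong M^{\oplus n}$ together with $\Ext_R^i(M,R)=0$ for $1\le i\le t$, since then $\Ext_R^i(\Hom_R(M,M),R)\cong\Ext_R^i(M,R)^{\oplus n}=0$. So the substance of the proof reduces to verifying that $M$ is a $(t+1)$st syzygy.

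For this, my first step would be to use Lemma \ref{22} as a bridge that transfers Ext-vanishing from $M$ to $M^*$. The module $M$ is reflexive, and the condition $\NF(M)\subseteq\Y^2(R)$ demanded by the lemma (with $N=M$) is precisely the local-freeness on $\X^1(R)$; hence $\Hom_R(M,M)^*\cong\Hom_R(M,M)$. Combined with $\Hom_R(M,M)\cong M^{\oplus n}$, this produces an isomorphism $(M^*)^{\oplus n}\cong M^{\oplus n}$. Applying $\Ext_R^i(-,R)$ and invoking additivity of Ext in the first argument then gives $\Ext_R^i(M^*,R)^{\oplus n}\cong\Ext_R^i(M,R)^{\oplus n}=0$, whence $\Ext_R^i(M^*,R)=0$ for all $1\le i\le t$.

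With this vanishing in hand, the $(t+1)$st syzygy status of $M$ falls out by a standard dualization: starting from any free resolution $\cdots\to F_1\to F_0\to M^*\to 0$ and applying $(-)^*$, the resulting complex $0\to M\to F_0^*\to F_1^*\to\cdots\to F_t^*$ is exact at $F_0^*$ by reflexivity of $M$ and exact at $F_j^*$ for $1\le j\le t$ exactly because $\Ext_R^j(M^*,R)=0$. This exhibits $M$ as a $(t+1)$st syzygy and completes the verification needed to invoke Corollary \ref{5}(1), from which the freeness of $M$ follows.

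The only conceptually nontrivial step is the second one: without the observation that $\Hom_R(M,M)\cong M^{\oplus n}$ lets Lemma \ref{22} induce a symmetry between $M$ and $M^*$, the Ext-vanishing hypothesis on $M$ provides no direct control on $M^*$, and the syzygy condition in Corollary \ref{5} cannot be reached. Everything else is routine bookkeeping against the machinery already established earlier in the paper.
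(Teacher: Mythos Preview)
Your proof is correct and follows essentially the same line as the paper's: both hinge on Lemma~\ref{22} (with $N=M$) together with the hypothesis $\Hom_R(M,M)\cong M^{\oplus n}$ to obtain $(M^*)^{\oplus n}\cong M^{\oplus n}$, and hence transfer the vanishing of $\Ext_R^i(M,R)$ to $\Ext_R^i(M^*,R)$; from this one extracts the needed syzygy condition and appeals to Corollary~\ref{5}.

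The only differences are cosmetic. The paper phrases the syzygy step as ``$\Hom_R(M,M)$ is $(t+2)$-torsionfree, hence so is $M$, hence $M$ is a $(t+2)$nd syzygy'' and then invokes Corollary~\ref{5}(3); you instead construct the syzygy embedding directly by dualizing a free resolution of $M^*$, verify $\Ext_R^i(\Hom_R(M,M),R)=0$ straight from $\Hom_R(M,M)\cong M^{\oplus n}$, and invoke Corollary~\ref{5}(1). Since the proof of (3) in Corollary~\ref{5} itself reduces to (1), your route is in fact slightly more direct. The paper also opens with an induction on $t$ to assume $M$ is locally free on $\X^{t-1}(R)$, a reduction you correctly observe to be unnecessary for the argument.
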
	

\begin{proof}
We use induction on $t$.
We may assume that $M$ is locally free on $\X^{t-1}(R)$.
By Lemma \ref{22}, we have $M^{\oplus n}\cong\Hom_R(M,M)\cong\Hom_R(M,M)^*\cong(M^*)^{\oplus n}$.
Since $M$ is reflexive, so is $\Hom_R(M,M)$ and
$$
\Ext_R^i(\Tr\Hom_R(M,M),R)
\cong\Ext_R^{i-2}(\Hom_R(M,M)^*,R)
\cong\Ext_R^{i-2}(M,R)^{\oplus n}
=0
$$
for all $3\le i\le t+2$.
Hence $\Hom_R(M,M)$ is $(t+2)$-torsionfree, and so is $M$.
In particular, $M$ is a $(t+2)$nd syzygy module.
Now the assertion follows from Corollary \ref{5}(3).
\end{proof}

Here we state an application of Theorem \ref{14}.

\begin{cor}\label{t2}
Let $R$ be a Cohen--Macaulay local ring with canonical module $\omega$.
Let $0<n<d$ be an integer, and let $M$ be an $R$-module that is locally free on $\X^n(R)$.
Then $M$ is free, if it satisfies one of the following conditions.
\begin{enumerate}[\rm(1)]
\item
$M$ satisfies $(\S_1)$, $M^*$ is maximal Cohen--Macaulay, and $\Ext_R^i(M,(M^*)^\dagger)=0$ for all $n\le i<d$.
\item
$\Min M\subseteq\Ass R$, $M^*$ is maximal Cohen--Macaulay, and $\Ext_R^i(M,(M^*)^\dagger)=0$ for all $n\le i\le d$.
\item
$M$ is a syzygy module, and $\Ext_R^i(M,R)=\Ext_R^j(M,M\otimes_R\omega)=0$ for all $0<i\le d$ and $n\le j<d$.
\end{enumerate}
\end{cor}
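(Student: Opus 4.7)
The plan is to reduce each of (1), (2), (3) to the situation $\NF(M)\subseteq\{\m\}$ by induction on $d$, paralleling the proof of Corollary \ref{5}(1), and then use Lemma \ref{l1} in combination with Theorem \ref{14} to convert the assumed Ext-vanishings into vanishing of $\Ext_R^i(\Hom_R(M,M),R)$ in the range required by Corollary \ref{5}.

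For the induction step, I localize each hypothesis at a prime $\p$ with $n<\depth R_\p\le d-1$. In every case the relevant conditions (maximal Cohen--Macaulayness of $M^*$, $(\S_1)$, $\Min M\subseteq\Ass R$, being a syzygy, and the Ext-vanishings) descend to $R_\p$ with $\depth R_\p$ in the role of $d$, so the induction hypothesis gives that $M_\p$ is $R_\p$-free. Hence $M$ is locally free on $\X^{d-1}(R)$, and since $\depth R=d$ we obtain $\NF(M)\subseteq\{\m\}$; in particular $M$ is locally totally reflexive on the punctured spectrum.

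With $\NF(M)\subseteq\{\m\}$, Lemma \ref{l1} applies for all $0\le i\le d$. In (1) and (2), taking $N=M^*$ yields
$$
\Ext_R^i(M,(M^*)^\dag)\cong\Ext_R^i(M\otimes_RM^*,\omega)\qquad(0\le i\le d).
$$
In (3), the vanishing of $\Ext_R^i(M,R)$ for $0<i\le d$ together with $M$ being a syzygy promotes $M$ to a $(d+1)$-st syzygy via iterated dualization, hence (by Example \ref{15}(3)) to a $(d+1)$-torsionfree module, which is in particular $2$-torsionfree with respect to $\omega$; this yields $M\otimes_R\omega\cong\Hom_R(M^*,\omega)=(M^*)^\dag$ by \cite[Proposition (2.6)]{AB}, and the same chain of isomorphisms converts $\Ext_R^j(M,M\otimes_R\omega)=0$ into $\Ext_R^j(M\otimes_RM^*,\omega)=0$ (using that $M^*$ is itself MCM here, as it is a $d$-th syzygy in the dualized resolution of $M$). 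Via local duality on $\omega$ together with the exact sequence of Lemma \ref{21}, through which $M\otimes_RM^*$ differs from $\Hom_R(M,M)$ only by Tor-terms of finite length once $M$ is reflexive, these vanishings translate into $\Ext_R^i(\Hom_R(M,M),R)=0$ in the range required. Corollary \ref{5}(1) (or (2) when finite G-dimension of $\Hom_R(M,M)$ is at hand) then applies with $t=d$ and forces $M$ to be free.

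The main obstacle is this last translation. One must verify that the hypotheses in (1) and (2) genuinely force $M$ to be reflexive, so that $\Hom_R(M,M)$ inherits sufficient depth from $M\otimes_RM^*$; in (2) this reflexivity is obtained from $\Min M\subseteq\Ass R$ combined with the extra Ext-vanishing in degree $d$, which plays the role of a substitute for the stronger $(\S_1)$ hypothesis of (1). In (3) the analogous delicate step is to justify that $M$ is a $(d+1)$-st syzygy with MCM dual, so that Lemma \ref{l1} can in fact be invoked with $N=M^*$.
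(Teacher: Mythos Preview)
Your plan has a genuine gap: you cannot feed $M$ into Corollary~\ref{5}. That corollary requires $M$ to be a $(d+1)$st syzygy and, in its part (1), that $\Ext_R^j(M,M)=0$ for $n\le j\le d-1$. Neither is available. In (1) and (2) the module $M$ is only assumed to satisfy $(\S_1)$ (respectively $\Min M\subseteq\Ass R$); nothing in the hypotheses forces $M$ to be reflexive, let alone a $(d+1)$st syzygy, and your proposal concedes this (``one must verify that \dots force $M$ to be reflexive'') without supplying an argument. In (3), having $M$ a syzygy with $\Ext_R^i(M,R)=0$ for $0<i\le d$ does not make $M$ a $(d+1)$st syzygy: those vanishings concern $\Ext_R^i(M,R)$, whereas $(d+1)$-torsionfreeness needs $\Ext_R^i(\Tr M,R)=\Ext_R^{i-2}(M^*,R)=0$. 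And even if you could translate $\Ext_R^i(M,(M^*)^\dag)=0$ into $\Ext_R^i(\Hom_R(M,M),R)=0$, you never explain where the companion condition $\Ext_R^j(M,M)=0$ comes from; it is not among the hypotheses, and local duality plus Lemma~\ref{21} do not produce it.

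The paper's proof avoids Corollary~\ref{5} entirely. After the same induction to reduce to $\NF(M)\subseteq\{\m\}$, it applies Theorem~\ref{14}(1) to the module $M^*$ (which \emph{is} maximal Cohen--Macaulay by hypothesis) to obtain $\lhom_R(M^*,M^*)^\vee\cong\Ext_R^{d-1}(M^{**},(M^*)^\dag)$; comparing with $\Ext_R^{d-1}(M,(M^*)^\dag)=0$ via the biduality map $f:M\to M^{**}$ (whose kernel and cokernel have finite length) shows $M^*$ is free. Then Theorem~\ref{14}(2) gives $\Ext_R^2(\Tr M,M^*)\cong\Ext_R^{d-1}(M,(M^*)^\dag)^\vee=0$, so $\Cok f=0$, whence $M\cong\Ker f\oplus M^{**}$; the $(\S_1)$ hypothesis kills $\Ker f$. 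Part (2) uses the extra vanishing at $i=d$ and $\Min M\subseteq\Ass R$ to reduce to (1), and (3) reduces to (1) after observing $(M^*)^\dag\cong M\otimes_R\omega$. The key move you are missing is to work with $M^*$ rather than $M$ in Theorem~\ref{14}, and to recover freeness of $M$ from freeness of $M^*$ through the biduality map.
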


\begin{proof}
(1) First of all, note that the inequalities $0<n<d$ especially says $d\ge2$.

(i) We begin with showing the assertion in the case where $M$ is locally free on the punctured spectrum of $R$.
In this case, Theorem \ref{14}(1) implies $\lhom_R(M^*,M^*)^\vee\cong\Ext_R^{d-1}(M^{**},M^{*\dag})$.
Let $f:M\to M^{**}$ be the natural homomorphism.
As $f$ is locally an isomorphism on the punctured spectrum, $K=\Ker f$ and $C=\Cok f$ have finite length.
Since $M^{*\dag}$ is maximal Cohen--Macaulay, we have $\Ext_R^j(K,M^{*\dag})=0=\Ext_R^j(C,M^{*\dag})=0$ for all $j<d$.
It is easy to observe that the homomorphism $\Ext_R^{d-1}(M^{**},M^{*\dag})\to\Ext_R^{d-1}(M,M^{*\dag})$ induced from $f$ is injective.
By assumption, the latter Ext module vanishes.
Hence $\lhom_R(M^*,M^*)=0$, which implies that $M^*$ is free.

Consider the case $M^*=0$.
Then $f=0$ and $M=K$.
Hence $M$ has finite length, while $M$ has positive depth as it satisfies $(\S_1)$.
Therefore $M=0$.
In particular $M$ is free, and we are done.
Thus we may assume $M^*\ne0$.
Theorem \ref{14}(2) implies $\Ext_R^{d-1}(M,M^{*\dag})^\vee\cong\Ext_R^2(\Tr M,M^*)$, and the former Ext module vanishes.
As $M^*$ is a nonzero free module, $\Ext_R^2(\Tr M,R)=0$.
This means $C=0$, and we have an exact sequence $0\to K\to M\to M^{**}\to0$, whose last term is free.
Hence this exact sequence splits, and we get an isomorphism $M\cong K\oplus M^{**}$.
As $M$ has positive depth and $K$ has finite length, we have $K=0$.
Consequently, $M$ is free.

(ii) Now let us consider the general case.
We handle this by induction on $d$.
When $d=2$, we have $n=1=d-1$, and $\X^n(R)$ coincides with the punctured spectrum of $R$.
Case (i) shows the assertion.
Let $d>2$, and pick any non-maximal prime ideal $\p$.
Then one can apply the induction hypothesis to the localizations $R_\p$ and $M_\p$ to deduce that $M_\p$ is free.
This means that $M$ is locally free on the punctured spectrum, and again case (i) implies the assertion.

(2) We use the same argument as the proof of (1).
Using the assumption that $\Min M\subseteq\Ass R$, we easily deduce that $M^*_\p\ne0$ for all $\p\in\supp M$.
When $M$ is locally free on the punctured spectrum, $M^*$ is a free module, and It follows from Theorem \ref{14}(2) that $\Ext_R^1(\Tr M,M^*)\cong\Ext_R^d(M,M^{*\dag})^\vee=0$.
As $M^*\ne0$, we have $\Ext_R^1(\Tr M,R)=0$.
This means that $M$ is a syzygy, and in particular it satisfies $(\S_1)$.

(3) Dualizing a free resolution $F$ of $M$ by $R$, we obtain an exact sequence $0\to M^*\to (F_0)^*\to\cdots\to (F_{d+1})^*$.
This especially says that $M^*$ is maximal Cohen--Macaulay.
By \cite[Corollary B4(3)]{ABS} we have $(M^*)^\dagger\cong M\otimes_R\omega$.
As $M$ is a syzygy module, it satisfies $(\S_1)$.
Now the assertion follows from (1).
\end{proof}

The above result is actually a refinement of \cite[Theorem 1.4 and Corollary 2.7]{ACST}.

\begin{cor}[Araya--Celikbas--Sadeghi--Takahashi]
Let $R,\omega,n$ be as in Corollary \ref{t2}.
\begin{enumerate}[\rm(1)]
\item
An $R$-module $M$ is free if it satisfies the following conditions.
\begin{enumerate}[\rm(i)]
\item
$M$ is locally of finite projective dimension on $\X^{n}(R)$.
\item
$M$ satisfies $(\S_2)$ and $M^{\ast}$ is maximal Cohen--Macaulay.
\item
$\Ext^{i}_R(M,(M^{\ast})^{\dagger})=0$ for all $n\le i<d$.
\end{enumerate}
\item
An $R$-module $M$ is free if it satisfies the following conditions.
\begin{enumerate}[\rm(i)]
\item
$M$ is locally of finite projective dimension on $\X^{n}(R)$.
\item
$M$ is reflexive and $\Ext^i_R(M,R)=0$ for all $1\le i\le d$.
\item
$\Ext^{i}_R(M,M\otimes_R\omega)=0$ for all $n\le i<d$.
\end{enumerate}
\end{enumerate}
\end{cor}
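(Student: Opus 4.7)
The plan is to deduce both parts directly from Corollary~\ref{t2}. The apparent gap is that Corollary~\ref{t2} assumes $M$ is locally free on $\X^n(R)$ while here we are given only that $M$ is locally of finite projective dimension on $\X^n(R)$; in each part I would upgrade the latter hypothesis to the former using the remaining assumptions, and then invoke the appropriate item of Corollary~\ref{t2}.

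For part~(2), fix $\p\in\X^n(R)$, so $\dim R_\p\le n<d$. Localizing the vanishing in (ii) gives $\Ext^i_{R_\p}(M_\p,R_\p)=0$ for all $1\le i\le d$, in particular for $1\le i\le\pd_{R_\p}M_\p$. Since any nonzero finitely generated module $N$ of finite projective dimension over a local ring satisfies $\Ext^{\pd N}(N,R)\ne 0$, this forces $\pd_{R_\p}M_\p=0$, so $M_\p$ is $R_\p$-free. Thus $M$ is locally free on $\X^n(R)$. Because $M$ is reflexive it is in particular a syzygy module, and \cite[Corollary B4(3)]{ABS} identifies $(M^*)^\dagger$ with $M\otimes_R\omega$; the remaining hypotheses of Corollary~\ref{t2}(3) follow from (ii) and (iii) directly, and applying that item yields the conclusion.

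For part~(1), the reduction to Corollary~\ref{t2}(1) is more delicate, and this is the main obstacle. I would induct on $s:=\dim R_\p$ for $\p\in\X^n(R)$. When $s\le 2$, condition $(\S_2)$ gives $\depth_{R_\p}M_\p\ge s$, so $M_\p$ is maximal Cohen--Macaulay of finite projective dimension over $R_\p$ and hence free by Auslander--Buchsbaum. For $s\ge 3$, the inductive hypothesis makes $M$ locally free on the punctured spectrum of $R_\p$, so each $\Ext^i_{R_\p}(M_\p,R_\p)$ with $i\ge 1$ has finite length. Setting $r=\pd_{R_\p}M_\p$, I would analyze the dualized minimal free resolution
\[
0\to M_\p^*\to F_0^*\to F_1^*\to\cdots\to F_r^*\to 0
\]
via the two hypercohomology spectral sequences of $\Gamma_{\m R_\p}$ applied to the complex $F_\bullet^*$. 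Each $F_i^*$ being maximal Cohen--Macaulay collapses one spectral sequence to a single row, identifying the abutment in total degree $s+i$ with the Matlis dual of $\Tor^{R_\p}_i(\omega_\p,M_\p)$. On the other hand, since $M_\p^*$ is maximal Cohen--Macaulay and the higher cohomologies $\Ext^i_{R_\p}(M_\p,R_\p)$ have finite length, the $E_2$-page of the other spectral sequence is concentrated at $(s,0)$ and along the column $p=0$, so its abutment in each total degree $j$ with $1\le j\le r<s$ is precisely $\Ext^j_{R_\p}(M_\p,R_\p)$. Comparing the two abutments forces $\Ext^j_{R_\p}(M_\p,R_\p)=0$ for all $1\le j\le r$; in particular $\Ext^r_{R_\p}(M_\p,R_\p)=0$, contradicting $r=\pd_{R_\p}M_\p>0$. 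Hence $r=0$, $M_\p$ is $R_\p$-free, so $M$ is locally free on $\X^n(R)$, and Corollary~\ref{t2}(1) finishes the argument.
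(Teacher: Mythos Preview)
Your overall strategy---upgrading ``locally of finite projective dimension'' to ``locally free'' on $\X^n(R)$ and then invoking Corollary~\ref{t2}---is exactly the paper's, but the execution differs.  The paper disposes of~(1) in one line by citing the beginning of the proof of \cite[Theorem~1.4]{ACST} for that upgrade and then applying Corollary~\ref{t2}(1); it then deduces~(2) from~(1) by observing that reflexivity gives $(\S_2)$, that the Ext vanishing in~(ii) makes $M^*$ maximal Cohen--Macaulay (as in the proof of Corollary~\ref{t2}(3)), and that $(M^*)^\dag\cong M\otimes_R\omega$.  Your route is more self-contained: you reprove the upgrade in~(1) via a hypercohomology spectral-sequence argument, and you handle~(2) directly through Corollary~\ref{t2}(3) without passing through~(1).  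Both approaches are valid; the paper buys brevity by citing \cite{ACST}, while yours buys independence from that reference.

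One step in your spectral-sequence argument for~(1) needs sharpening.  You claim that the second spectral sequence has abutment $\Ext^j_{R_\p}(M_\p,R_\p)$ in each total degree $j$ with $1\le j\le r<s$, but at $j=s-1$ there is a potential differential $d_s\colon E_s^{0,s-1}\to E_s^{s,0}=H^s_{\m R_\p}(M_\p^*)$, so in that degree one only obtains $\ker d_s$.  What actually rescues the argument is that, under the paper's convention $(\S_2)$ means $\depth M_\p\ge\min\{2,\dim R_\p\}$ (compare the use of $(\S_1)$ in the proofs of Corollaries~\ref{t2}(1) and~\ref{l}); hence for $s\ge 3$ one has $\depth M_\p\ge 2$ and, by Auslander--Buchsbaum, $r\le s-2$, so the problematic degree $s-1$ never arises.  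You should make this bound explicit rather than writing only $r<s$.
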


\begin{proof}
(1) Similarly to the beginning of the proof of \cite[Theorem 1.4]{ACST}, the module $M$ is locally free on $\X^n(R)$.
Corollary \ref{t2}(1) implies the assertion.

(2) Since $M$ satisfies $(\S_2)$, it is reflexive.
As in the proof of Corollary \ref{t2}(3) the module $M^*$ is maximal Cohen--Macaulay.
Thus the assertion follows from (1).
\end{proof}

From here to the end of this section, we consider a {\em rational normal surface singularity}, that is, a complete local normal domain of dimension two over an algebraically closed field of characteristic zero which has a rational singularity.
Recall that an $R$-module $M$ is called {\em rigid} if $\Ext_R^1(M,M)=0$.
The following corollary is deduced from our Theorem \ref{7} and Corollary \ref{5}.

\begin{cor}
Let $R$ be a rational normal surface singularity.
Let $M$ be a third syzygy $R$-module.
If $M$ is rigid, then $M$ is free.
\end{cor}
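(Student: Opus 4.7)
The strategy is to apply Corollary~\ref{5}(1) with $t=d=2$ and $n=1$. A rational normal surface singularity is Cohen--Macaulay of dimension two, so $t=d=2$, and since $R$ is a normal domain it is regular in codimension one. Hence every $R$-module---in particular $M$---is locally free on $\X^1(R)$. Since $M$ is a third (i.e. $(t+1)$-st) syzygy, Example~\ref{15}(3) yields that $M$ is $3$-torsionfree. The rigidity hypothesis $\Ext^1_R(M,M)=0$ is precisely the vanishing $\Ext^j_R(M,M)=0$ required for $n\le j\le t-1$.

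The only remaining hypothesis of Corollary~\ref{5}(1) is then $\Ext^2_R(\Hom_R(M,M),R)=0$. This condition can be isolated cleanly by applying Theorem~\ref{7}(1) with $K=R$, $N=M$, $n=2$: its hypotheses hold, and rigidity produces $\Ext^1_R(\Hom_R(M,M),R)=0$ together with an injection
\[
\Ext^2_R(\lhom_R(M,M),R)\hookrightarrow \Ext^2_R(\Hom_R(M,M),R).
\]
Because $M$ is locally free on the punctured spectrum, $\lhom_R(M,M)$ has finite length; and any nonzero finite-length module $X$ satisfies $\Ext^{\depth R}_R(X,R)\ne 0$ (its grade equals $\depth R=2$). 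So vanishing of the right-hand side forces $\lhom_R(M,M)=0$, which means $\mathrm{id}_M$ factors through a free module, whence $M$ is free.

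The main obstacle is therefore $\Ext^2_R(\Hom_R(M,M),R)=0$, which is where the rationality of $R$ enters decisively. By Auslander's classical characterization of rational (normal) surface singularities, $\Hom_R(N,N)$ is maximal Cohen--Macaulay for every MCM $R$-module $N$; since $M$ is MCM (as a third syzygy over a two-dimensional CM ring), $\Hom_R(M,M)$ has depth two. In the Gorenstein (rational double point) case, $R\cong\omega_R$ and the desired vanishing is immediate from local duality. In general one uses the finer structural features of rational surface singularities---the finiteness of the divisor class group and the finite-length cokernel of the natural embedding $R\hookrightarrow\omega_R$---to compare $\Ext^\ast_R(-,R)$ with $\Ext^\ast_R(-,\omega_R)$ for MCM modules and extract the vanishing, after which Corollary~\ref{5}(1) (equivalently, the injection above) completes the proof.
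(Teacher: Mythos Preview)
Your setup is sound and closely matches the paper's: you correctly reduce to verifying the hypotheses of Corollary~\ref{5}(1) with $t=2$, $n=1$, you invoke Example~\ref{15}(3) to get that $M$ is $3$-torsionfree, and from Theorem~\ref{7}(1) together with rigidity you extract $\Ext^1_R(H,R)=0$ for $H=\Hom_R(M,M)$. (A small slip: it is not true that \emph{every} $R$-module is locally free on $\X^1(R)$; what you use is that $M$ is a syzygy, so $M_\p$ is free over the regular local ring $R_\p$ for $\height\p\le 1$.) You also correctly note that $H$ is maximal Cohen--Macaulay---but this follows from \cite[Exercise 1.4.19]{BH} over any $2$-dimensional Cohen--Macaulay local ring and has nothing to do with rationality, so your attribution to an ``Auslander characterization of rational singularities'' is misplaced.

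The genuine gap is your handling of $\Ext^2_R(H,R)=0$ in the non-Gorenstein case. Your Gorenstein argument via local duality is fine, but the sketch for general rational surface singularities is not a proof. There is no canonical embedding $R\hookrightarrow\omega_R$ with finite-length cokernel: over a $2$-dimensional normal local domain, $\omega_R$ is a height-one divisorial ideal, and any embedding in either direction has a cokernel of dimension one, not zero. Even granting such a short exact sequence, the comparison you propose would leave $\Ext^2_R(H,R)$ controlled by $\Ext^i_R(H,C)$ for a finite-length $C$, and there is no reason for these to vanish. So the ``finer structural features'' you invoke do not yield the vanishing you need.

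The paper proceeds differently at exactly this point. Having obtained $\Ext^1_R(H,R)=0$, it uses Lemma~\ref{22} (which applies since $M$ is reflexive and locally free on $\X^1(R)$) to get $H\cong H^\ast$, and then appeals to a result of Iyama--Wemyss \cite[Theorem~2.2 and Proposition~3.1(2)]{IW} specific to rational surface singularities: a maximal Cohen--Macaulay module $H$ with $\Ext^1_R(H,R)=0$ and $H\cong H^\ast$ must be free. With $H$ free, $\Ext^2_R(H,R)=0$ is immediate, and Corollary~\ref{5}(1) finishes the argument. In other words, rationality enters through \cite{IW} rather than through any comparison of $R$ and $\omega_R$; you should replace your final paragraph by this step.
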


\begin{proof}
Set $H=\Hom_R(M,M)$.
Note that $H$ is maximal Cohen--Macaulay (see \cite[Exercise 1.4.19]{BH}).
The module $M$ is $3$-torsionfree by Example \ref{15}(3).
It follows from Theorem \ref{7}(1) and the rigidity of $M$ that $\Ext_R^1(H,R)=0$.
Lemma \ref{22} implies $H\cong H^*$.
By \cite[Theorem 2.2 and Proposition 3.1(2)]{IW} the module $H$ is free.
Finally, our Corollary \ref{5}(1) deduces that $M$ is free.
\end{proof}

In view of Corollary \ref{5}(3), we are interested in when a given module is $(d+2)$-torsionfree.
Any totally reflexive module is $(d+2)$-torsionfree.
If $R$ is Gorenstein, then it is equivalent to maximal Cohen--Macaulayness.
Thus we are interested in when a non-totally reflexive module over a non-Gorenstein ring is $(d+2)$-torsionfree.
We discuss the triviality of such modules.

\begin{prop}\label{13}
Let $R$ be a non-Gorenstein complete local ring with $d\ge2$ such that $k$ is algebraically closed and has characteristic $0$.
Assume that there exists an $R$-sequence $\xx=x_1,\dots,x_{d-2}$ such that $R/(\xx)$ is a rational normal surface singularity.
Let $M$ be an $R$-module such that $\Ext_R^i(M,R)=0$ for all $1\le i\le 2d$.
Then $M$ is $R$-free.
\end{prop}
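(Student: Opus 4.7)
The plan is to reduce the $d$-dimensional problem to the two-dimensional setting over $\bar R := R/(\xx)$, which is a rational normal surface singularity, and then invoke the previous corollary on such singularities.

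First I would observe that since $\xx$ is an $R$-regular sequence of length $d-2$ and $\bar R$ is Cohen--Macaulay of dimension $2$, the ring $R$ is Cohen--Macaulay of dimension $d$. I would then replace $M$ by $N:=\syz^d M$, which is a $d$-th syzygy over the $d$-dimensional CM ring $R$, hence maximal Cohen--Macaulay, and satisfies $\Ext^i_R(N, R) = \Ext^{i+d}_R(M, R) = 0$ for $1 \le i \le d$. Once $N$ is shown to be free, $\pd_R M \le d$ holds, and combined with the hypothesis $\Ext^i_R(M, R) = 0$ for $1 \le i \le 2d$, this forces $\pd_R M = 0$, so $M$ is $R$-free. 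Thus I may assume $M$ is itself MCM with $\Ext^i_R(M, R) = 0$ for $1 \le i \le d$.

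Since $M$ is MCM, $\xx$ is $M$-regular, and so $\bar M := M/\xx M$ is MCM over $\bar R$. Standard change of rings yields $\Ext^i_{\bar R}(\bar M, \bar R) \cong \Ext^i_R(M, \bar R)$ for all $i \ge 0$. Iterating the long exact Ext sequences induced by $0 \to R/(x_1,\ldots,x_{j-1}) \xrightarrow{x_j} R/(x_1,\ldots,x_{j-1}) \to R/(x_1,\ldots,x_j) \to 0$ and using $\Ext^i_R(M,R) = 0$ for $1 \le i \le d$, I would deduce $\Ext^i_R(M, \bar R) = 0$ for $1 \le i \le 2$, hence $\Ext^i_{\bar R}(\bar M, \bar R) = 0$ for $i = 1, 2$. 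Moreover, a free resolution of $M$ over $R$ stays exact after reduction modulo $\xx$ because every syzygy of $M$ is MCM over the CM ring $R$, hence $\xx$-regular; consequently $\bar M$ is a syzygy of arbitrary order over $\bar R$, and in particular a third syzygy.

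The final step is to apply the previous corollary on rational normal surface singularities to $\bar M$ over $\bar R$. The module $\bar M$ is MCM, locally free on the punctured spectrum of $\bar R$ (since $\bar R$ has isolated singularity), and $3$-torsionfree over $\bar R$ by Example~\ref{15}(3). What remains is rigidity, $\Ext^1_{\bar R}(\bar M, \bar M) = 0$. For this, I would set $H := \Hom_{\bar R}(\bar M, \bar M)$ and argue along the lines of the proof of the surface corollary but in reverse: using the vanishing $\Ext^i_{\bar R}(\bar M, \bar R) = 0$ for $i = 1, 2$ together with the Auslander--Bridger-type sequence underlying Theorem~\ref{7} (specialized to $M=N=\bar M$, $K=\bar R$), derive that $\Ext^1_{\bar R}(H,\bar R)=0$; then combine this with the self-duality $H \cong H^*$ from Lemma~\ref{22} (valid since $\bar M$ is reflexive and $\NF(\bar M) \subseteq \{\bar{\mathfrak{m}}\} \subseteq \Y^2(\bar R)$) and the Iyama--Wemyss theorem to conclude that $H$ is $\bar R$-free. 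Rigidity of $\bar M$ then follows from Theorem~\ref{7}(1) with $n=2$, since the injection $\Ext^1_{\bar R}(H,\bar R) \hookrightarrow \Ext^1_{\bar R}(\bar M, \bar M)$ combined with the freeness of $H$ and a grade argument on $\lhom_{\bar R}(\bar M,\bar M)$ forces $\Ext^1_{\bar R}(\bar M,\bar M)=0$. The surface corollary then gives that $\bar M$ is $\bar R$-free, and standard lifting along the regular sequence $\xx$ concludes that $M$ is $R$-free. The main obstacle is this rigidity step, requiring a delicate interplay between Theorem~\ref{7}, Lemma~\ref{22}, and the Iyama--Wemyss structure theorem for endomorphism rings of MCM modules over rational normal surface singularities.
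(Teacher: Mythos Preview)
Your reduction to the surface $\bar R=R/(\xx)$ is sound up through the vanishing $\Ext^i_{\bar R}(\bar M,\bar R)=0$ for $i=1,2$, but the rigidity step collapses. Theorem~\ref{7}(1) with $M=N=\bar M$, $K=\bar R$, $n=2$ gives
\[
0 \to \Ext^1_{\bar R}(H,\bar R) \to \Ext^1_{\bar R}(\bar M,\bar M) \to \Ext^2_{\bar R}(\lhom_{\bar R}(\bar M,\bar M),\bar R) \to \Ext^2_{\bar R}(H,\bar R),
\]
and no term here involves $\Ext^i_{\bar R}(\bar M,\bar R)$; your derivation of $\Ext^1_{\bar R}(H,\bar R)=0$ from that vanishing is therefore unsupported. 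Worse, even if $H$ were free, the sequence would yield $\Ext^1_{\bar R}(\bar M,\bar M)\cong\Ext^2_{\bar R}(\lhom_{\bar R}(\bar M,\bar M),\bar R)$; since $\lhom_{\bar R}(\bar M,\bar M)$ has finite length over the $2$-dimensional Cohen--Macaulay ring $\bar R$, its grade equals $2$ whenever it is nonzero, so this $\Ext^2$ does not vanish. Your ``grade argument'' thus proves nothing beyond what you are trying to show, and the whole rigidity step is circular.

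The paper avoids rigidity entirely. It sets $N=\syz^{d-2}M$ rather than $\syz^dM$, so that after reduction modulo $\xx$ one retains $\Ext^i_{\bar R}(\bar N,\bar R)=0$ for $1\le i\le 4$, not merely $i=1,2$. With four vanishings one appeals directly to \cite[Proposition 3.1(1)]{IW} together with \cite[Proposition 1.1.1]{I} to conclude that $\syz^2_{\bar R}(\bar N)$ is $\bar R$-free, hence $\pd_{\bar R}\bar N<\infty$, hence $\pd_RN=\pd_{\bar R}\bar N<\infty$ and $\pd_RM<\infty$; combined with the hypothesis $\Ext^i_R(M,R)=0$ for $1\le i\le 2d$ this forces $M$ to be free. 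The surface corollary on rigid third syzygies is never invoked.
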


\begin{proof}
Set $N=\syz_R^{d-2}M$.
It is seen from \cite[Proposition 1.1.1]{I} that $N$ is a $(d-2)$-torsionfree module with $\Ext_R^i(N,R)=0$ for $1\le i\le d+2$.
As $N$ is a $(d-2)$nd syzygy module, $\xx$ is an $N$-sequence.
Hence the Koszul complex of $\xx$ on $N$ induces an exact sequence
$$
0 \to N \to N^{\oplus(d-2)} \to \cdots \to N^{\oplus(d-2)} \to N \to N/\xx N \to 0,
$$
which shows that $\Ext_{R/(\xx)}^i(N/\xx N,R/(\xx))\cong\Ext_R^{i+d-2}(N/\xx N,R)=0$ for $1\le i\le4$ (see \cite[Lemma 3.1.16]{BH}).
It follows from \cite[Proposition 3.1(1)]{IW} and \cite[Proposition 1.1.1]{I} again that $\syz^2_{R/(\xx)}(N/\xx N)$ is free over $R/(\xx)$.
Thus $\pd_RN=\pd_{R/(\xx)}N/\xx N$ is finite (see \cite[Lemma 1.3.5]{BH}), and so is $\pd_RM$.
The vanishing assumption on Ext modules forces $M$ to be free over $R$.
\end{proof}

\begin{cor}
Let $R$ be as in Proposition \ref{13}.
Let $M$ be a $(d+2)$-torsionfree $R$-module such that $\Ext_R^i(M,R)=0$ for all $1\le i\le d-2$.
Then $M$ is free.
In particular:
\begin{enumerate}[\rm(1)]
\item
When $d=2$, every $(d+2)$-torsionfree $R$-module is free.
\item
$R$ is a G-regular local ring in the sense of \cite{greg}.
\end{enumerate}
\end{cor}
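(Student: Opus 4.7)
We plan to argue by induction on $d\ge 2$. Since $R/(\xx)$ is a two-dimensional Cohen--Macaulay rational surface singularity and $\xx$ is an $R$-regular sequence of length $d-2$, the ring $R$ itself is Cohen--Macaulay of depth $d$; hence the $(d+2)$-torsionfreeness of $M$ forces $\depth_R M\ge d$, so $M$ is maximal Cohen--Macaulay and every $R$-regular element of $\m$ is $M$-regular.

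In the base case $d=2$, the sequence $\xx$ is empty and $R$ itself is a rational normal surface singularity. Since $M$ is $4$-torsionfree we have $\Ext^i_R(\Tr M,R)=0$ for $1\le i\le 4$. Dualizing a free presentation $F_1\to F_0\to M\to 0$ gives the exact sequence $0\to M^*\to F_0^*\to F_1^*\to\Tr M\to 0$, which identifies $M^*$ with $\syz^2_R\Tr M$ up to free summands. The plan is to invoke \cite[Proposition 3.1(1)]{IW} applied to $\Tr M$ over the rational normal surface singularity $R$ to conclude that $\syz^2_R\Tr M$, and therefore $M^*$, is free. Since $M$ is reflexive (as a $(d+2)$-torsionfree module with $d+2\ge 2$), $M\cong M^{**}$ is then also free.

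For the inductive step $d\ge 3$, the plan is to reduce modulo $x_1$. Set $R':=R/(x_1)$ and $M':=M/x_1M$; then $R'$ satisfies the hypotheses of Proposition~\ref{13} in dimension $d-1$ via the regular sequence $x_2,\dots,x_{d-2}$, so it suffices to show that $M'$ verifies the hypotheses of the corollary over $R'$, namely that $M'$ is $(d+1)$-torsionfree and $\Ext^i_{R'}(M',R')=0$ for $1\le i\le d-3$. Descending the free presentation of $M$ to $R'$ identifies the transpose $\Tr_{R'}M'$ with $\Tr_RM/x_1\Tr_RM$. Rees's formula \cite[Lemma 3.1.16]{BH} then gives $\Ext^i_{R'}(\Tr_{R'}M',R')\cong\Ext^{i+1}_R(\Tr_RM/x_1\Tr_RM,R)$, and the long exact sequence of $\Hom_R(-,R)$ applied to $0\to\Tr M\xrightarrow{x_1}\Tr M\to\Tr M/x_1\Tr M\to 0$ converts the hypothesis $\Ext^i_R(\Tr M,R)=0$ for $1\le i\le d+2$ into $\Ext^j_R(\Tr M/x_1\Tr M,R)=0$ for $2\le j\le d+2$, hence $\Ext^i_{R'}(\Tr_{R'}M',R')=0$ for $1\le i\le d+1$. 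The Ext vanishing for $M'$ is obtained by the same Rees-plus-LES argument applied to $0\to M\xrightarrow{x_1}M\to M'\to 0$ together with the hypothesis $\Ext^i_R(M,R)=0$ for $1\le i\le d-2$. By the induction hypothesis $M'$ is free over $R'$, and then \cite[Lemma 1.3.5]{BH} gives $\pd_R M=\pd_{R'}M'=0$, so $M$ is free.

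The main delicate point is the $\Tr M$-regularity of $x_1$, needed for exactness of the long exact sequence used above. Here the assumption $\Ext^1_R(M,R)=0$ (available whenever $d-2\ge 1$) plays a crucial role: it makes $\Tr M$ a torsion-free (i.e.\ $1$-torsionfree) module, whence $\Ass\Tr M\subseteq\Ass R=\Min R$ (the last equality using that $R$ is Cohen--Macaulay), and the $R$-regular element $x_1$, avoiding all minimal primes of $R$, is therefore a non-zerodivisor on $\Tr M$. Assertion~(1) of the corollary is the base case of the main statement, since the Ext hypothesis is vacuous for $d=2$. For~(2), any totally reflexive $R$-module is $n$-torsionfree for all $n$ and satisfies $\Ext^{>0}_R(-,R)=0$, so it trivially meets the hypotheses of the main statement; hence every totally reflexive $R$-module is free, which means $R$ is G-regular.
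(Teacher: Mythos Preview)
Your argument is correct, but it takes a different route from the paper's. The paper proceeds directly: setting $N=\Tr\syz^{d-2}M$, the hypotheses on $M$ (namely $(d+2)$-torsionfreeness and $\Ext_R^i(M,R)=0$ for $1\le i\le d-2$) translate, via \cite[Proposition 1.1.1]{I}, into $\Ext_R^i(N,R)=0$ for all $1\le i\le 2d$. Proposition~\ref{13} then applies to $N$ in one stroke, giving $N$ free; hence $\syz^{d-2}M$ is free, $M$ has finite projective dimension, and being maximal Cohen--Macaulay it is free. No induction, no reduction modulo $x_1$.

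Your approach instead unwinds the reduction modulo the regular sequence one variable at a time: the base case $d=2$ is essentially the core computation inside the proof of Proposition~\ref{13} (the same appeal to \cite[Proposition 3.1(1)]{IW}), and the inductive step reproduces, for the corollary's hypotheses, the descent arguments that Proposition~\ref{13} already packages. The checks you perform (regularity of $x_1$ on $\Tr M$ via $1$-torsionfreeness, the Rees isomorphism, the identification $\Tr_{R'}M'\cong\Tr_RM/x_1\Tr_RM$) are all sound. What the paper's route buys is brevity and a clean reuse of Proposition~\ref{13} as a black box; what your route buys is a self-contained argument that makes explicit how the torsionfreeness and Ext-vanishing hypotheses descend along $x_1$. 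One small remark: in your base case you should note, as the paper does for the analogous step, that the application of \cite[Proposition 3.1(1)]{IW} to $\syz^2\Tr M$ also uses \cite[Proposition 1.1.1]{I} to convert $\Ext^i_R(\Tr M,R)=0$ for $1\le i\le 4$ into the required Ext-vanishing for $\syz^2\Tr M$.
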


\begin{proof}
We observe that $N:=\Tr\syz^{d-2}M$ satisfies $\Ext_R^i(N,R)=0$ for $1\le i\le 2d$; see \cite[Proposition 1.1.1]{I}.
Proposition \ref{13} implies that $N$ is free, and hence so is $\syz^{d-2}M$.
Thus $M$ has finite projective dimension.
Since $M$ is $(d+2)$-torsionfree, it is maximal Cohen--Macaulay.
Therefore $M$ is free.
\end{proof}


\end{document}